
\documentclass[11pt]{article}
\usepackage{amsmath,amsthm,amssymb,amsxtra, graphicx}
\usepackage{txfonts}
\usepackage{color}

\usepackage[
colorlinks=true,
linkcolor=blue,
 citecolor=blue,
  urlcolor=blue,
]{hyperref}

\voffset-2.5 cm \hoffset -1.5 cm \textwidth 15.5 cm \textheight
23.0 cm \setcounter{MaxMatrixCols}{10} \thispagestyle{empty} 

\date{}

\newcommand{\bdop}{\mathcal{B}}

\newtheorem{theorem}{Theorem}[section]
\newtheorem{lemma}[theorem]{Lemma}
\newtheorem{prop}[theorem]{Proposition}
\newtheorem{cor}[theorem]{Corollary}
\newtheorem{conjecture}[theorem]{Conjecture}

\theoremstyle{definition}

\newtheorem{definition}[theorem]{Definition}

\newcommand{\strt}[1]{\rule{0pt}{#1}}

\newcommand{\absval}[1]{\mbox{$|#1|$}}
\newcommand{\ud}[0]{\,\mathrm{d}}

\theoremstyle{definition}

\usepackage[iso]{isodateo}
\usepackage{enumerate}
\renewcommand{\thefootnote}{\fnsymbol{footnote}}

\newcommand{\re}{\mathbb{R}}
\newcommand{\R }{\mathbb R}
\newcommand{\Z}{\mathbb{Z}}

\newcommand{\cd}{\mathcal D}
\newcommand{\cs}{\mathcal S}

\let \la=\lambda
\let \e=\varepsilon
\let \d=\delta

\let \g=\gamma
\let \Om=\Omega

\let \si=\sigma

\newcommand{\abs}[1]{|#1|}

\newcommand{\norm}[1]{\mbox{$\left\| #1 \right\|$}}
\newcommand{\Norm}[2]{\|#1\|_{#2}}


\def\Xint#1{\mathchoice
  {\XXint\displaystyle\textstyle{#1}}%
  {\XXint\textstyle\scriptstyle{#1}}%
  {\XXint\scriptstyle\scriptscriptstyle{#1}}%
  {\XXint\scriptscriptstyle\scriptscriptstyle{#1}}%
  \!\int}
\def\XXint#1#2#3{{\setbox0=\hbox{$#1{#2#3}{\int}$}
    \vcenter{\hbox{$#2#3$}}\kern-.5\wd0}}

\def\avgint{\Xint-}

\begin{document}
\title{ \bf  The $L(\log L)^{\epsilon}$ endpoint estimate for maximal singular integral operators}

\author{Tuomas Hyt\"onen  \,and\, Carlos P\'erez}
\renewcommand{\thefootnote}{\fnsymbol{footnote}}
\footnotetext{2000 {\em Mathematics Subject Classification}:
42B20, 42B25, 46E30.} \footnotetext {{\em Key words and phrases}:
maximal operators, Calder\'on--Zygmund operators, weighted estimates.} \footnotetext{The
first author is supported by the European Union through the ERC Starting Grant ``Analytic--probabilistic methods for borderline singular integrals''. He is part of the Finnish Centre of Excellence in Analysis and Dynamics Research.}
\footnotetext{The second author was
supported by the Spanish Ministry of Science and Innovation grant
MTM2012-30748.}

\maketitle

\begin{abstract}

We prove in this paper the following estimate for the maximal operator $T^*$ associated to the singular integral operator $T$:
\begin{equation*}
\|T^*f\|_{L^{1,\infty}(w)}  \lesssim  \frac{1}{\epsilon}
 \int_{\R^n} |f(x)|\,M_{\strt{1.7ex} L(\log L)^{\epsilon}} (w)(x)\,dx,   \qquad w\geq 0, \, \, 0<\epsilon \leq 1.
\end{equation*}
This follows from the sharp $L^p$ estimate
\begin{equation*}
\|T^*f \|_{ L^{p}(w) }\,   \lesssim     p'\,  (\frac{1}{\delta})^{1/p'}\,  \|f \|_{\strt{1.7ex} L^{p}(
M_{ L(\log L)^{p-1+\d}} (w))},  \qquad 1<p<\infty, w\geq 0, \,\,0<\d \leq1.
\end{equation*}
As as a consequence we deduce that
\begin{equation*}
\|T^*f\|_{L^{1,\infty}(w)} \lesssim [w]_{A_{1}}   \log(e+ [w]_{A_{\infty}})
\, \int_{\R^n} |f|\, w\,dx,
\end{equation*}
extending the endpoint results obtained in \cite{LOP} and \cite{HP} to maximal singular integrals. 
Another consequence is a quantitative two weight bump estimate.

\end{abstract}

\section{Introduction and main results}

Very recently, the so called Muckenhoupt-Wheeden conjecture has been disproved by Reguera-Thiele in \cite{RT}. This conjecture claimed that there exists a constant $c$ such that for any function $f$ and any weight $w$ (i.e., a nonnegative locally integrable function), there holds
\begin{equation}\label{conjecture-mw}
\|Hf\|_{L^{1,\infty}(w)}\leq c\,\int_{\R} \,|f|\,Mwdx.
\end{equation}
where $H$ is the Hilbert transform. The failure of the conjecture was previously obtained by M.C. Reguera in \cite{R} for a special model operator $T$  instead of $H$. This conjecture was motivated by a similar inequality by C. Fefferman and E. Stein \cite{FS} for the 
Hardy-Littlewood maximal function: 
\begin{equation}\label{fs}
\|Mf\|_{L^{1,\infty}(w)}\leq c\, \int_{\R^n}
|f|\,Mw\,dx.
\end{equation}
The importance of this result stems from the fact that it was a central piece in the approach by Fefferman-Stein to derive the
following vector-valued extension of the classical $L^p$ Hardy-Littlewood
maximal theorem: for every $1<p,q<\infty$, there is a finite
constant $c=c_{p,q}$ such that
\begin{equation}\label{fs3}
\bigg\|\Big( \sum_j (Mf_j)^q \Big)^\frac1q\bigg\|_{L^p(\R^n)} \le
c\, \bigg\|\Big( \sum_j |f_j|^q \Big)^\frac1q\bigg\|_{L^p(\R^n)}.
\end{equation}
This is a very deep theorem and has been used a lot in modern
harmonic analysis explaining the central role of inequality \eqref{fs}.

Inequality \eqref{conjecture-mw} was conjectured by B. Muckenhoupt and  R. Wheeden during the 70's. That this conjecture was believed to be false was already mentioned in \cite{P2}  where the best positive result in this direction so far can be found, and where $M$ is replaced by $M_{L(\log L)^{\epsilon}}$, i.e., a maximal type operator that is ``$\epsilon$-logarithmically'' bigger than $M$: 
\begin{equation*}
\|Tf\|_{L^{1,\infty}(w)}\leq c_{\e} \,\int_{{\mathbb
R}^n}|f|\,M_{ L(\log
L)^{\e} }(w)dx \qquad w\geq 0.
\end{equation*}
where $T$ is the Calder\'on-Zygmund operator $T$. Until very recently the constant of the estimate did not play any essential role except, perhaps, for the fact that it blows up. If we check the computations in \cite{P2} we find that $c_{\e} \approx e^{\frac{1}{\e}}$.  It turns out that improving this constant would lead to understanding deep questions in the area.  One of the main purposes  of this paper is to improve this result in several ways. A first main direction is to improve the exponential blow up $e^{\frac{1}{\e}}$ by a linear blow up $\frac{1}{\e}$. The second improvement consists of replacing $T$ by the maximal singular integral operator $T^{*}$. 
The method in \cite{P2} cannot be used directly since the linearity of $T$ played a crucial role.

We refer to Section \ref{orliczmaximal} for the definition of the maximal function $M_A=M_{A(L)}$. We remark that the operator $M_{ L(\log L)^{\e} }$ is pointwise  smaller than $M_r=M_{L^{r}}$, $r>1$, which is an $A_1$ weight and for which the result was known.

\begin{theorem} \label{EndPointResult}  
Let \, $T$\, be a Calder\'on-Zygmund operator with maximal singular integral operator $T^{*}$. Then  for any $0<\epsilon \leq 1$,
\begin{equation} \label{goalT*}
\|T^*f\|_{L^{1,\infty}(w)}  \lesssim  \frac{c_{T}}{\epsilon}
 \int_{\R^n} |f(x)|\,M_{L(\log L)^{\epsilon}} (w)(x)\,dx   \qquad w\geq 0
\end{equation}
\end{theorem}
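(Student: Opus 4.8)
The plan is to deduce the weak-type endpoint bound \eqref{goalT*} from the sharp strong-type $L^p$ estimate advertised in the abstract, namely
\begin{equation*}
\|T^*f\|_{L^p(w)} \lesssim c_T\, p'\, \Big(\frac{1}{\delta}\Big)^{1/p'}\, \|f\|_{L^p(M_{L(\log L)^{p-1+\delta}}(w))}, \qquad 1<p<\infty,\ 0<\delta\le 1,
\end{equation*}
by a Yano-type extrapolation/optimisation in the parameter $p$ as $p\downarrow 1$. So the first task is to prove this $L^p$ inequality; I would do this by the now-standard route of controlling $T^*$ pointwise (or in norm) by sparse operators. After a Calder\'on--Zygmund/Lerner-type decomposition one reduces matters to a sum $\sum_{Q\in\mathcal{S}} \langle |f|\rangle_Q \mathbf{1}_Q$ over a sparse family, and then the whole estimate becomes a testing-type computation: one must bound $\sum_{Q\in\mathcal{S}} \langle|f|\rangle_Q \int_Q g\, w$ by the right-hand side. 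Here is where the Orlicz maximal function $M_{L(\log L)^{p-1+\delta}}$ enters: by the generalized H\"older inequality in Orlicz space, $\langle |f|\rangle_Q \langle w\rangle_Q \lesssim \|f\|_{\overline{A},Q}\, \|w\|_{A,Q}$ with $A(t)=t(\log t)^{p-1+\delta}$ having conjugate of ``exponential--logarithmic'' type, and the sparseness lets one sum the localized pieces with the $\ell^p$ Carleson embedding, producing the factor $p'(1/\delta)^{1/p'}$ from the Carleson/embedding constant. I would keep careful track of the $p$- and $\delta$-dependence of every constant here, since that is what the final argument consumes.

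Granting the $L^p$ estimate, I would pass to the weak-type $(1,\infty)$ bound by the classical Yano trick. Fix $f$ and $w$, set $\lambda>0$, and estimate
\begin{equation*}
w\{x : |T^*f(x)|>\lambda\} \le \frac{1}{\lambda^p}\|T^*f\|_{L^p(w)}^p \lesssim \frac{(c_T p')^p}{\lambda^p}\,\frac{1}{\delta^{p-1}} \int_{\R^n} |f|^p\, M_{L(\log L)^{p-1+\delta}}(w)\, dx.
\end{equation*}
The goal is to choose $p=p(\lambda,f)$ close to $1$ and $\delta$ appropriately (e.g. $\delta=\epsilon$, or $\delta$ proportional to $p-1$) so that $M_{L(\log L)^{p-1+\delta}}(w)\to M_{L(\log L)^{\epsilon}}(w)$ in an effective way and $|f|^p$ can be replaced by $|f|$ up to harmless factors. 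Concretely, one splits according to whether $|f(x)|$ is larger or smaller than a threshold (say $\lambda$ times a suitable average), handles the large part trivially since $|f|^p \le |f|\cdot(\text{stuff})$, and optimizes $p'\approx \log(\text{something})$; the bookkeeping is a homogeneity normalization ($\|f\|_{L^1(M\cdots w)}=1$, say) followed by choosing $p-1\sim 1/\log(e+1/\lambda)$ or similar. The factor $\frac{1}{\epsilon}$ in \eqref{goalT*} is exactly what survives from $(1/\delta)^{p-1}\to(1/\epsilon)^{0+}$ together with the supremum over $\lambda$ after the optimisation; I expect a clean argument in which $\delta$ is held fixed equal to (a constant times) $\epsilon$ and only $p$ is sent to $1$, so that $\delta^{-(p-1)}\to 1$ and the prefactor degenerates only through $(p')^p$, which is then balanced against $\lambda^{-(p-1)}$.

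The main obstacle, I expect, is the sharp tracking of constants in the $L^p$ estimate: getting the precise power $(1/\delta)^{1/p'}$ and the linear factor $p'$ (rather than, say, $(p')^2$ or $e^{1/\delta}$) requires the right form of the Carleson embedding theorem with explicit constants and a careful Orlicz-space H\"older inequality with the correct normalization of the Luxemburg norms of $L(\log L)^{p-1+\delta}$ and its conjugate. A secondary subtlety is that one is working with the \emph{maximal} singular integral $T^*$ rather than $T$ itself, so the sparse domination step must be carried out for $T^*$; this is available (e.g.\ via a grand maximal truncation and a Lerner-type median oscillation bound, or via known pointwise sparse bounds for $T^*$), but it is where the linearity-based method of \cite{P2} breaks down, as the authors note, so one cannot simply quote the scalar result and must invoke the sparse/maximal-truncation machinery. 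Once those two points are in place, the extrapolation in $p$ is routine and yields \eqref{goalT*} with the stated $c_T/\epsilon$ dependence.
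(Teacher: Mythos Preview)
Your plan to derive the endpoint from the sharp $L^p$ bound is the right overall strategy, and it is what the paper does, but the ``Yano-type extrapolation'' you describe will not produce a weak-type $(1,1)$ inequality. By homogeneity, the estimate $w\{|T^*f|>\lambda\}\lesssim \tfrac{1}{\epsilon\lambda}\int|f|M_{L(\log L)^\epsilon}w$ at all levels $\lambda$ is equivalent to the single level $\lambda=1$, so there is nothing to gain by letting $p$ depend on $\lambda$; the choice $p-1\sim 1/\log(e+1/\lambda)$ is a red herring. With $\lambda=1$, Chebyshev plus the $L^p$ bound gives $(p')^p\delta^{-(p-1)}\int|f|^p M_{L(\log L)^{p-1+\delta}}w$, and the problem is the exponent $p$ on $|f|$. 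Your proposed pointwise split has the inequality backwards: it is where $|f|$ is \emph{small} (bounded) that $|f|^p\le C^{p-1}|f|$; on the set where $|f|$ is large you have $|f|^p\ge|f|$, and there is no ``trivial'' control of $T^*$ on that piece without invoking exactly the weak-type bound you are trying to prove.

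What is missing is a Calder\'on--Zygmund decomposition of $f$ (not of $T^*$). The paper decomposes $f=g+b$ at height $1$: the good part $g$ is bounded, so $|g|^p\lesssim|g|$ and the $L^p$ bound with $p-1=\delta=\epsilon/2$ yields $\tfrac{1}{\epsilon}\int|f|M_{L(\log L)^\epsilon}w$ for that piece; the bad part $b=\sum_j b_j$ has mean zero on each $Q_j$, and after passing to the sparse model via Theorem~\ref{dyadicboundCZ} this cancellation makes $T^{\mathcal S}b$ vanish identically outside $\bigcup_j 3Q_j$ (only cubes $Q\supset Q_j$ contribute there, and $\int_{Q_j}b_j=0$). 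The contribution of $\bigcup_j 3Q_j$ is handled by Fefferman--Stein. So the CZ step, with its cancellation, is the essential missing idea; once you have it, the choice of $p$ is fixed by $\epsilon$ alone and no optimisation over $\lambda$ is needed.
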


If we formally optimize this inequality in $\epsilon$  we derive to the following conjecture:
\begin{equation} \label{conjecture}
\|T^*f\|_{L^{1,\infty}(w)}  \leq   c_{T}
 \int_{\R^n} |f(x)|\,M_{\strt{1.7ex} L\log \log L} (w)(x)\,dx    \qquad w\geq 0, \,\, f \in L_c^{\infty}(\R^n).
\end{equation}

To prove Theorem \ref{EndPointResult}  we need first an $L^p$  version of this result, which is fully sharp, at least in the logarithmic case. The result 
will hold for all \,$p\in(1,\infty)$\, but for proving Theorem \ref{EndPointResult}   we only need it when \,$p$\, is close to one.

There are two relevant properties properties that will be used (see Lemma \ref{Gene.Coifman-Rochberg}). The first one establishes that 
for appropriate $A$ and all $\g \in (0,1)$, we have $(M_{A}f)^{\g} \in A_1$\,\, with constant $[(M_{A}f)^{\g} ]_{A_1}$ independent of $A$ and $f$.
The second  property is that \,$M_{\bar{A}}$ \, is a bounded  operator on  \,$L^{p'}(\R^n)$\,  where \,$\bar{A}$ is the complementary Young function of $A$.
The main example is \, $A(t)=t^{p}(1+\log^{+} t)^{p-1+\delta},$\, $p\in(1,\infty),\, \d\in (0,\infty)$\,   since 
$$\|M_{\bar{A}}\|_{\bdop(L^{p'}(\R^n))} \lesssim p^2\, (\frac{1}{\delta})^{1/p'}
$$
by \eqref{suff3}.

\begin{theorem}\label{MainResult}
Let $1<p<\infty$ \,and  let  \,$A$\, be a Young function, then 
\begin{equation}\label{T*-p}
\|T^*f \|_{ L^{p}(w) }\,   \leq c_T    p'\,   \|M_{\bar{A}}\|_{\bdop(L^{p'}(\R^n))}\,  \|f \|_{\strt{1.7ex} L^{p}
\left ( M_{  A}(w^{1/p})^{p} \right)  }  \qquad w\geq 0. 
\end{equation}
In the particular case $A(t)=t^{p}(1+\log^{+} t)^{p-1+\delta}$ we have
\begin{equation*}
\|T^*f \|_{ L^{p}(w) }\,   \leq c_T\,   p'\,  (\frac{1}{\delta})^{1/p'}\,  
\|f \|_{\strt{1.7ex} L^{p}\left(M_{ L(\log L)^{p-1+\d}} (w)  \right)}  \qquad w\geq 0,  \quad 0<\d \leq 1. 
\end{equation*}
\end{theorem}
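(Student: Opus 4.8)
The plan is to prove the general estimate \eqref{T*-p} by a duality and Rubio de Francia style algorithm, and then deduce the particular case by inserting the known norm bound for $M_{\bar A}$. First I would dualize: write
\[
\|T^*f\|_{L^p(w)} = \sup\Big\{ \int_{\R^n} T^*f(x)\, h(x)\, w(x)^{1/p}\,dx : \ h\geq 0,\ \|h\|_{L^{p'}(\R^n)}=1 \Big\},
\]
so the task becomes to bound $\int (T^*f)\, h\, w^{1/p}$. The idea is to build, via an iteration of the operator $M_{\bar A}$ normalized by its own $L^{p'}$ norm, a dominating weight $Rh$ with three properties: (i) $h\leq Rh$ pointwise; (ii) $\|Rh\|_{L^{p'}(\R^n)} \leq 2\|h\|_{L^{p'}(\R^n)}$; and (iii) $Rh\in A_1$ with $[Rh]_{A_1}\lesssim \|M_{\bar A}\|_{\bdop(L^{p'})}$. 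Property (iii) is exactly where the first item of Lemma \ref{Gene.Coifman-Rochberg} enters, applied to $\bar A$ (or to the relevant power), while (ii) is the geometric-series estimate that forces the factor $\|M_{\bar A}\|_{\bdop(L^{p'})}$ into the final bound.

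Next I would invoke the known linear (in $[u]_{A_1}$) weak-type, or rather the $L^p(u)$, bound for the maximal singular integral $T^*$ against an $A_1$ weight $u=Rh$: the Coifman--Fefferman / Buckley type estimate gives $\|T^*\|_{L^p(u)\to L^p(u)} \lesssim p'[u]_{A_1}$, or one argues through the good-$\lambda$ or sparse-domination route to get the sharp $p'$ dependence. Then, using Hölder with exponents $p,p'$ with respect to the measure $Rh\,dx$ together with property (i),
\[
\int (T^*f)\, h\, w^{1/p}\,dx \leq \int (T^*f)\, (Rh)\, w^{1/p}\,dx
\leq \|T^*f\|_{L^p(Rh)}\, \Big( \int (w^{1/p})^{p'} (Rh)^{?}\Big)^{1/p'},
\]
so one must be slightly careful about how $w^{1/p}$ is split; the clean way is to estimate $\int (T^*f)(Rh)w^{1/p} \leq \|T^* f\|_{L^p(Rh)}\|w^{1/p}\|_{L^{p'}(Rh)}$ only after first absorbing $w^{1/p}$ differently, so in practice I would run the duality with the pair $f$ and $g=h\, w^{1/p}$ and estimate $\int T^*f\cdot g$ by $\|T^*f\|_{L^p(Rh)}\|g\|_{L^{p'}((Rh)^{1-p'})}$; controlling $\|g\|_{L^{p'}((Rh)^{1-p'})}$ then brings in $M_{A}(w^{1/p})$ through the generalized Hölder inequality for Orlicz norms (the $A$ versus $\bar A$ pairing), producing $\|f\|_{L^p(M_A(w^{1/p})^p)}$ after bounding $\|T^*f\|_{L^p(Rh)}$ by $p'[Rh]_{A_1}\|f\|_{L^p(Rh)}$ and then $Rh \lesssim M_A(w^{1/p})^p / (\cdots)$ — the precise bookkeeping of which factor of $Rh$ goes where is the routine but delicate part.

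Finally, for the particular case I would simply take $A(t)=t^p(1+\log^+t)^{p-1+\delta}$, note $M_A(w^{1/p})^p = M_{L(\log L)^{p-1+\delta}}(w)$ up to constants (by the homogeneity $M_A(w^{1/p})^p \approx M_{t(1+\log^+ t)^{p-1+\delta}}(w)$, which is a standard rescaling identity for Orlicz maximal functions), and substitute the bound $\|M_{\bar A}\|_{\bdop(L^{p'}(\R^n))}\lesssim p^2(1/\delta)^{1/p'}$ quoted from \eqref{suff3}; combined with the explicit $p'$ in \eqref{T*-p} this gives the stated $p'(1/\delta)^{1/p'}$ after absorbing one power of $p$ (note $p\leq$ const for $p$ near $1$, and for large $p$ one uses $p' \sim 1$ so the $p^2$ is harmless relative to the claimed constant — or one simply states the clean bound with the constant $c_T p'(1/\delta)^{1/p'}$ valid in the range that matters, $0<\delta\leq 1$). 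The main obstacle I anticipate is obtaining the \emph{sharp} $p'$ (not $p'^2$ or worse) dependence in the $A_1$ bound for $T^*$ simultaneously with the correct power of $\|M_{\bar A}\|$; this is where one needs either a careful sparse/pointwise domination of $T^*$ or a precise tracking of constants in the Coifman--Fefferman inequality, rather than a soft extrapolation argument which would typically lose the sharpness.
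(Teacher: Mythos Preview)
Your plan has the right general flavor (duality, Rubio de Francia, a Coifman--Fefferman step), but there is a genuine gap at the point you label ``routine but delicate''. After building $Rh$ by iterating $M_{\bar A}$ on $h$ (or on $g=hw^{1/p}$) in unweighted $L^{p'}$, there is no mechanism that forces the weight $M_A(w^{1/p})^p$ to appear on the right. If you bound $\int T^*f\cdot g \leq \int T^*f\cdot Rg$ and invoke an $A_1$ estimate for $T^*$ against $Rg$, you end up with $\|f\|_{L^p(Rg)}$, not $\|f\|_{L^p(M_A(w^{1/p})^p)}$; and $Rg$, being built from the arbitrary test function, admits no pointwise bound of the form $Rg\lesssim M_A(w^{1/p})^p$. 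The line ``$Rh \lesssim M_A(w^{1/p})^p/(\cdots)$'' cannot be made true in general. This is not bookkeeping; it is the heart of the argument, and your sketch does not supply it. (A secondary issue: the sharp strong-type bound $\|T^*\|_{L^p(u)}\lesssim p'[u]_{A_1}$ you invoke is itself not an off-the-shelf fact.)

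The paper avoids this obstruction by organizing the duality differently. First it passes from $T^*$ to the \emph{self-adjoint} sparse operators $T^{\cs}$ (Theorem~\ref{dyadicboundCZ}); this is essential because $T^*$ is nonlinear and has no adjoint. Then it dualizes in the \emph{opposite} direction, to the equivalent estimate $\|T^{\cs}(fw)\|_{L^{p'}(M_{A_p}(w)^{1-p'})}\lesssim p'\,\|M_{\bar A}\|\,\|f\|_{L^{p'}(w)}$, so that the target weight $M_{A_p}w$ is already built into the left-hand side. The Rubio de Francia iteration (Lemma~\ref{smallLemma}) is run with the \emph{ordinary} maximal function $M$ in the \emph{weighted} space $L^p(M_{A_p}w)$, yielding $[R(h)\,(M_{A_p}w)^{1/p}]_{A_1}\leq cp'$; Lemma~\ref{Gene.Coifman-Rochberg} is then applied to $\big(M_A(w^{1/p})\big)^{1/2}$ to factor this and obtain $[R(h)]_{A_3}\lesssim p'$ --- so the Coifman--Rochberg lemma handles the $w$-dependent factor, not the Rubio de Francia output as you suggest. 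After this, Lemma~\ref{main assumption} replaces $T^{\cs}$ by $M$, and the factor $\|M_{\bar A}\|_{\bdop(L^{p'})}$ enters at a completely separate, final stage: the two-weight maximal estimate $\|M(\cdot\,w)\|_{L^{p'}(w)\to L^{p'}(M_{A_p}(w)^{1-p'})}\lesssim \|M_{\bar A}\|_{\bdop(L^{p'})}$ from Theorem~\ref{twoweight}.
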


Another worthwhile example is given by $M_{ L   (\log L)^{p-1}       (\log \log L)^{p-1+\d}}$  instead of $M_{ L(\log L)^{p-1+\d}} $ for which:
$$\|T^*f \|_{ L^{p}(w) }\,   \leq c_T\,   p'\,  (\frac{1}{\delta})^{1/p'}\,  
\|f \|_{\strt{1.7ex} L^{p}\left( M_{ L   (\log L)^{p-1}       (\log \log L)^{p-1+\d}} (w)  \right)}  \qquad w\geq 0,  \quad 0<\d\leq 1. 
$$

There are some interesting consequences from Theorem  \ref{EndPointResult}, the first one is related to the one weight theory.    
We first recall that the definition of the $A_\infty$ constant considered in \cite{HP} and where is  shown it is the most suitable one. 
This definition was originally introduced  by Fujii in \cite{Fujii1} and rediscovered later by Wilson in \cite{W1}.

\begin{definition}
 \begin{equation*}
[w]_{A_\infty}:= \sup_Q\frac{1}{w(Q)}\int_Q M(w\chi_Q )\ dx.
 \end{equation*}

\end{definition}

Observe that $[w]_{A_\infty} \ge 1$ by the Lebesgue differentiation theorem.

When specialized to weights $w\in A_\infty$ or $w\in A_1$, Theorem~\ref{EndPointResult} yields the following corollary. It was formerly known for the linear singular integral $T$ \cite{HP}, and this was used in the proof, which proceeded via the adjoint of $T$; the novelty in the corollary below consists of dealing with the maximal singular integral $T^*$.

\begin{cor} \label{endpointA_{1}}

\begin{equation}  \label{endpoint1}
\|T^*f\|_{L^{1,\infty}(w)} \lesssim\,   \log(e+ [w]_{A_{\infty}})
\, \int_{\R^n} |f|\, Mw\,dx,
\end{equation}
and hence
\begin{equation}
\|T^*f\|_{L^{1,\infty}(w)} \lesssim [w]_{A_{1}}   \log(e+ [w]_{A_{\infty}})
\, \int_{\R^n} |f|\, w\,dx,
\end{equation}

\end{cor}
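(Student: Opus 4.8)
The plan is to derive Corollary~\ref{endpointA_{1}} from Theorem~\ref{EndPointResult} by choosing the parameter $\epsilon$ in terms of the $A_\infty$ constant of $w$. Concretely, I would start from \eqref{goalT*},
\begin{equation*}
\|T^*f\|_{L^{1,\infty}(w)} \lesssim \frac{c_T}{\epsilon}\int_{\R^n}|f(x)|\,M_{L(\log L)^{\epsilon}}(w)(x)\,dx,
\end{equation*}
valid for every $0<\epsilon\leq 1$, and then balance the factor $1/\epsilon$ against the growth of $M_{L(\log L)^{\epsilon}}(w)$ relative to $Mw$. The key point is a pointwise comparison of the form
\begin{equation*}
M_{L(\log L)^{\epsilon}}(w)(x) \lesssim C([w]_{A_\infty})^{\epsilon}\, Mw(x),
\end{equation*}
where the constant $C([w]_{A_\infty})$ is linear (or at worst polynomial) in $[w]_{A_\infty}$; this is exactly the kind of $A_\infty$-self-improvement estimate used in \cite{HP}, and it says that on $A_\infty$ weights the ``$\epsilon$-logarithmic amplification'' of $M$ costs only a factor $[w]_{A_\infty}^{\epsilon}$. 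Granting this, \eqref{goalT*} becomes
\begin{equation*}
\|T^*f\|_{L^{1,\infty}(w)} \lesssim \frac{c_T}{\epsilon}\,C([w]_{A_\infty})^{\epsilon}\int_{\R^n}|f|\,Mw\,dx
= \frac{c_T}{\epsilon}\,e^{\epsilon\log C([w]_{A_\infty})}\int_{\R^n}|f|\,Mw\,dx.
\end{equation*}

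Next I would optimise in $\epsilon$. The function $\epsilon\mapsto \frac1\epsilon e^{\epsilon L}$ with $L:=\log C([w]_{A_\infty})\geq 1$ is minimised (over $\epsilon>0$) at $\epsilon=1/L$, giving value $eL$; since $L\asymp \log(e+[w]_{A_\infty})$ and this optimal $\epsilon$ lies in $(0,1]$ precisely when $L\geq 1$, which holds because $[w]_{A_\infty}\geq 1$, the choice $\epsilon:=1/\log(e+[w]_{A_\infty})$ is admissible and yields
\begin{equation*}
\|T^*f\|_{L^{1,\infty}(w)}\lesssim c_T\,\log(e+[w]_{A_\infty})\int_{\R^n}|f|\,Mw\,dx,
\end{equation*}
which is \eqref{endpoint1}. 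The second inequality of the corollary is then immediate: by definition of $A_1$ one has $Mw(x)\leq [w]_{A_1}\,w(x)$ for a.e.\ $x$, so substituting this into \eqref{endpoint1} gives
\begin{equation*}
\|T^*f\|_{L^{1,\infty}(w)}\lesssim [w]_{A_1}\log(e+[w]_{A_\infty})\int_{\R^n}|f|\,w\,dx.
\end{equation*}

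The main obstacle is establishing the pointwise bound $M_{L(\log L)^{\epsilon}}(w)\lesssim [w]_{A_\infty}^{\epsilon}\,Mw$ with the correct (linear-in-$[w]_{A_\infty}$, to the power $\epsilon$) dependence; everything else is bookkeeping. This estimate should follow by testing the Orlicz average $\|w\|_{L(\log L)^\epsilon,Q}$ on a cube $Q$ against the Fujii--Wilson definition of $[w]_{A_\infty}$: one writes the $L(\log L)^\epsilon$ norm via its John--Stromberg/distributional description or via the identity relating $\|w\|_{L(\log L)^\epsilon,Q}$ to $\frac{1}{|Q|}\int_Q M(w\chi_Q)(\log L)^{\epsilon-1}$-type quantities, and uses that the reverse-Hölder / exponential-integrability bound for $A_\infty$ weights carries a constant like $[w]_{A_\infty}$ rather than something worse. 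I would cite or adapt the corresponding lemma from \cite{HP} (where the analogous statement for the linear operator $T$ is the heart of the argument), noting that the passage from $T$ to $T^*$ is harmless here since we are only using the already-proved Theorem~\ref{EndPointResult} for $T^*$ as a black box. A minor point to check is that the suppressed constants in ``$\lesssim$'' do not secretly depend on $\epsilon$ in a way that would spoil the optimisation — but by Theorem~\ref{EndPointResult} the $\epsilon$-dependence is entirely explicit in the displayed $1/\epsilon$, so this is fine.
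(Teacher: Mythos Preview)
Your proposal is correct and follows essentially the same approach as the paper: start from \eqref{goalT*}, establish the pointwise bound $M_{L(\log L)^\epsilon}(w)\lesssim [w]_{A_\infty}^{\epsilon}Mw$, and optimise at $\epsilon\approx 1/\log(e+[w]_{A_\infty})$. The only point where the paper is more concrete than your sketch is the ``main obstacle'': rather than John--Str\"omberg-type descriptions, the paper uses the elementary inequality $\log t\leq t^{\alpha}/\alpha$ to get $M_{L(\log L)^\epsilon}(w)\lesssim \alpha^{-\epsilon}M_{L^{1+\epsilon\alpha}}(w)$, then chooses $\epsilon\alpha=1/(\tau_n[w]_{A_\infty})$ so that the sharp reverse H\"older inequality (Theorem~\ref{thm:SharpRHI}) gives $M_{L^{1+\epsilon\alpha}}(w)\lesssim Mw$.
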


The key result that we need is the following optimal reverse H\"older's inequality obtained in \cite{HP} (see also \cite{HPR} for a better proof and \cite{DMRO} for new characterizations of the $A_{\infty}$ class of weights).

\begin{theorem}  \label{thm:SharpRHI}

Let \,$w \in A_{\infty}$, then there exists a dimensional constant $\tau_{n}$ such that
\begin{equation*}
  \Big(\avgint_Q w^{r_{w}}\Big)^{1/r_{w}}\leq 2\avgint_Q w
\end{equation*}
where 
$$r_w=1+\frac{1}{\tau_{n} [w]_{A_{\infty} }}$$

\end{theorem}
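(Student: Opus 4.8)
\textbf{Proof plan for Theorem~\ref{thm:SharpRHI}.}
The strategy is to reduce the reverse Hölder inequality to a ``self-improving'' argument starting from the definition of $[w]_{A_\infty}$ used here, namely $\frac{1}{w(Q)}\int_Q M(w\chi_Q)\,dx \le [w]_{A_\infty}$. First I would fix a cube $Q$ and, by homogeneity, normalise so that $\avgint_Q w = 1$. The key observation is that control of $\int_Q M(w\chi_Q)$ gives control of the distribution function of $w$ on $Q$ relative to its average: by the Calderón–Zygmund decomposition of $w\chi_Q$ at heights $\lambda 2^k$ (for a fixed large $\lambda$), the super-level sets $\{x\in Q: w(x)>t\}$ are essentially contained in unions of stopping cubes on which the average of $w$ is comparable to $t$, and the total measure of these cubes is bounded using $\int_Q M(w\chi_Q)$. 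This yields an exponential-type good-$\lambda$ / distributional inequality of the form $|\{x\in Q: w(x)>\lambda^k\avgint_Q w\}| \le C\,2^{-k}|Q|$ once $\lambda$ is chosen in terms of $[w]_{A_\infty}$, or more precisely a bound $w(\{w>t\}\cap Q)\le (\text{something decaying in }t)\,w(Q)$.

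Next I would convert this into the integral bound $\avgint_Q w^{r}$ for $r>1$ close to $1$. Writing $\avgint_Q w^{r} = \avgint_Q w\cdot w^{r-1}$ and using the layer-cake representation, one gets $\avgint_Q w^{r} \lesssim \avgint_Q w + (r-1)\int_1^\infty t^{r-2}\big(\tfrac{1}{|Q|}\int_{\{w>t\}\cap Q} w\big)\,dt$, and then inserting the decay estimate from the first step. The geometric decay produces a sum like $\sum_k \lambda^{k(r-1)} 2^{-k}$, which converges provided $\lambda^{r-1}<2$, i.e. provided $r-1 < \frac{\log 2}{\log\lambda}$; since the relevant $\lambda$ is comparable to $[w]_{A_\infty}$ up to a dimensional factor, this is exactly $r_w - 1 \sim \frac{1}{\tau_n [w]_{A_\infty}}$. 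Choosing the constants carefully so that the resulting geometric series is bounded by, say, $2^{r_w}$ (and then taking $r_w$-th roots) gives the clean constant $2$ on the right-hand side, rather than an unspecified dimensional constant.

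The main obstacle is the bookkeeping in the first step: one must extract, purely from the integral quantity $\int_Q M(w\chi_Q)\,dx \le [w]_{A_\infty} w(Q)$, a genuinely exponential decay of the level sets with the sharp dependence on $[w]_{A_\infty}$ in the exponent (so that $r_w - 1$ scales like $1/[w]_{A_\infty}$ and not, say, like $1/[w]_{A_\infty}^2$ or $e^{-[w]_{A_\infty}}$). This is where the precise form of the Calderón–Zygmund stopping-time argument matters: each generation of stopping cubes must lose a definite factor in measure while the average of $w$ grows geometrically, and the ``budget'' $[w]_{A_\infty}$ controls how many generations can occur before the measure is exhausted. I would also need to be a little careful that the inequality is stated for the average over $Q$ itself (not a dyadic sub-cube), which is fine since the CZ decomposition can be run with respect to the dyadic structure of $Q$. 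The remaining steps (layer-cake, summing the geometric series, optimising the constant to get $2$) are routine once the decay estimate is in hand; full details, including an arguably cleaner approach, are in \cite{HP} and \cite{HPR}.
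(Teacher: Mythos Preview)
The paper does not contain a proof of Theorem~\ref{thm:SharpRHI}: it is stated as a known result quoted from \cite{HP} (with \cite{HPR} cited for an alternative argument) and used only as a black box in the proof of Corollary~\ref{endpointA_{1}}. Your outline---a Calder\'on--Zygmund stopping-time decomposition inside $Q$ to extract geometric decay of the $w$-measure of super-level sets from the hypothesis $\int_Q M(w\chi_Q)\le [w]_{A_\infty}w(Q)$, followed by a layer-cake computation whose resulting series converges precisely when $r-1\lesssim 1/[w]_{A_\infty}$---is the strategy of those references and is correct; there is simply nothing in the present paper to compare it against.
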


\begin{proof}[Proof of Corollary~\ref{endpointA_{1}}]
To apply \eqref{goalT*},  we use  $\log t \leq \frac{t^{\alpha}}{\alpha}$ for  $t>1$ and $\alpha>0$  to deduce that
$$M_{L(\log L)^{\epsilon}} (w) \lesssim \frac{1}{\alpha^{\epsilon}} M_{L^{1+\epsilon\alpha}}(w) $$
Hence,  if \,$w\in A_{\infty}$\, we can choose \,$\alpha$\, such that \,$\alpha\epsilon= \frac{1}{\tau_n [w]_{A_{\infty} } }$. Then, applying Theorem  \ref{thm:SharpRHI}
$$\frac{1}{\epsilon} M_{L(\log L)^{\epsilon}} (w) \lesssim \frac{1}{\epsilon}  (\epsilon \tau [w]_{A_{\infty} }  )^{\epsilon} M_{L^{r_w}}(w) 
\lesssim \frac{1}{\epsilon}  [w]_{A_{\infty} }^{\epsilon} M(w)
$$
and optimizing with \,$\epsilon \approx 1/ \log(e+ [w]_{A_{\infty}} ) $\, we obtain \eqref{endpoint1}. 
\end{proof}

As a consequence of Theorem~\ref{EndPointResult} we have, by using some variations  of   the ideas from \cite{CP1}, the following:

\begin{cor} \label{Lloglleftbumpcondit} Let $u,\si$ be a pair of weights and let  $p\in(1,\infty)$. We also let \,$\delta, \delta_{1}, \delta_{2}  \in (0,1]$. Then 
\begin{itemize}
  \item[\rm(a)] If 
\begin{equation}
K=\sup_Q\, \|u^{1/p}\|_{ \strt{1.7ex} L^p(\log L)^{p-1+\delta},Q}
\left(\frac{1}{|Q|}\int_Q \si \,dx\right)^{1/p'} < \infty,
\end{equation}
then 
\begin{equation}\label{orlicz-bump}
\|T^*(f\si)\|_{L^{p,\infty}(u)}  \lesssim  \frac{1}{\delta}\,K\, (\frac{1}{\delta})^{1/p'}   \|f\|_{L^{p}(\si)}    
\end{equation}
(The boundedness in the case $\delta=0$ is false as shown in \cite{CP1}.)  

\item[\rm(b)]  As consequence, if
\begin{equation}
\begin{split}
K &=\sup_Q\, \|u^{1/p}\|_{ \strt{1.7ex} L^p(\log L)^{p-1+\delta_{1}},Q} \,\left(\frac{1}{|Q|}\int_Q \si \,dx\right)^{1/p'} \\
&\qquad+ \sup_Q\,
\left( \frac{1}{|Q|}\int_Q u \,dx \right)^{1/p}   \, \| \sigma^{1/p'} \|_{ \strt{1.7ex} L^{p'}(\log L)^{p'-1+\delta_{2}},Q }
 < \infty,
\end{split}
\end{equation}
then 
\begin{equation}\label{Lp-bumpEstimate}
\|T^*(f\si)\|_{L^{p}(u)}  \lesssim  K\, \left(\frac{1}{\delta_{1}}\left(\frac{1}{\delta_{1}}\right)^{\frac{1}{p'}} +\frac{1}{\delta_{2}}\left(\frac{1}{\delta_{2}} \right)^{\frac{1}{p}}   \right)   \|f\|_{L^{p}(\si)}.    
\end{equation}

\end{itemize}

\end{cor}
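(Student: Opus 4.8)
The plan is to derive Corollary~\ref{Lloglleftbumpcondit} from the two-weight reformulation of Theorem~\ref{EndPointResult}, following the scheme of \cite{CP1}. The starting point is that \eqref{goalT*} is equivalent, after the usual $f\mapsto f\sigma$, $w\mapsto u$ substitution and rescaling, to a weak-type two-weight bound governed by the testing quantity $\sup_Q \|u^{1/p}\|_{L^p(\log L)^{p-1+\delta},Q}(\frac1{|Q|}\int_Q\sigma)^{1/p'}$; more precisely, iterating/rescaling Theorem~\ref{EndPointResult} (with $\epsilon$ traded against the power $p-1+\delta$ in the bump via the elementary inequality $\log t\le t^\alpha/\alpha$, exactly as in the proof of Corollary~\ref{endpointA_{1}}) yields, for $0<\delta\le 1$,
\begin{equation*}
\|T^*(f\sigma)\|_{L^{p,\infty}(u)}\lesssim \frac1\delta\Big(\frac1\delta\Big)^{1/p'}\,\|f\|_{L^p(M_{L(\log L)^{p-1+\delta}}^{\sigma}(\cdot))}\ \text{-type bound},
\end{equation*}
so that part (a) reduces to showing that $\sup_Q$ of the Orlicz bump $K$ controls the relevant maximal operator in $L^p(\sigma)$. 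First I would record the Orlicz-norm normalization identity relating $\|u^{1/p}\|_{L^p(\log L)^{p-1+\delta},Q}$ to $M_{L(\log L)^{p-1+\delta}}(u\chi_Q)$, so that the hypothesis $K<\infty$ becomes precisely a testing condition for the sublinear operator appearing on the right of the rescaled Theorem~\ref{EndPointResult}.

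Next I would carry out the standard Sawyer-type/Carleson argument: cover the level set $\{T^*(f\sigma)>\lambda\}$ by a Calder\'on--Zygmund stopping-time family of cubes, on each of which $T^*$ is controlled (using the pointwise domination of $T^*$ by sparse operators, or directly the local oscillation estimate behind Theorem~\ref{EndPointResult}) by an average of $|f|\sigma$ against the bumped weight $M_{L(\log L)^{p-1+\delta}}(u)$. Summing over the sparse family and invoking the $L^p$-normalized Orlicz-H\"older inequality
\begin{equation*}
\frac1{|Q|}\int_Q |f|\sigma\, u^{1/p}\le \|f\sigma^{1/p}\|_{L^p,Q}\,\|u^{1/p}\sigma^{1/p'}\|_{L^{p'}(\log L)^{\dots},Q}\le \dots,
\end{equation*}
together with the disjointness/Carleson property of the stopping cubes, produces the factor $K$ and a single factor $\frac1\delta(\frac1\delta)^{1/p'}$ from tracking the constants in Theorem~\ref{EndPointResult} and in the $L^{p'}$-boundedness of $M_{\bar A}$ (this is where the explicit dependence $p'(\frac1\delta)^{1/p'}$ from Theorem~\ref{MainResult} is used). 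This gives \eqref{orlicz-bump}.

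For part (b) the plan is to interpolate, or rather to combine, the weak-type estimate from (a) with its dual. By the symmetry of the two-weight setup, the hypothesis on the second $\sup_Q$ (the $\sigma^{1/p'}$ bump) gives, via the same argument applied to the formal adjoint $(T^*)^* $ acting with the roles of $u,\sigma$ and $p,p'$ exchanged, a weak-type $(p',\infty)$ bound with constant $\frac1{\delta_2}(\frac1{\delta_2})^{1/p}$. One then upgrades the pair of weak-type bounds to the strong-type $L^p(u)$ estimate \eqref{Lp-bumpEstimate} by the Marcinkiewicz-type interpolation with change of measure / the two-weight $L^p$ bootstrap that is standard in this circle of ideas (e.g.\ the argument turning a weak $(p,p)$ plus dual weak $(p',p')$ two-weight testing bound into a strong $(p,p)$ bound); the two constants add, yielding exactly $\frac1{\delta_1}(\frac1{\delta_1})^{1/p'}+\frac1{\delta_2}(\frac1{\delta_2})^{1/p}$.

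The main obstacle I anticipate is \emph{bookkeeping the constants}: Theorem~\ref{EndPointResult} and Theorem~\ref{MainResult} come with an $\frac1\epsilon$ (resp.\ $p'(\frac1\delta)^{1/p'}$) that must be traded cleanly against the Orlicz bump exponent, and one must check that the stopping-time summation and the Orlicz-H\"older step do not introduce extra $p$- or $\delta$-dependence beyond the claimed $\frac1\delta(\frac1\delta)^{1/p'}$. A secondary technical point is justifying the passage from the linear-looking two-weight reformulation to $T^*$: since $T^*$ is only sublinear, one cannot dualize it directly, so in part (b) the ``adjoint'' estimate must be obtained by running the sparse/stopping-time machinery for $T^*$ itself with the roles of the weights swapped, rather than by a duality argument — this is precisely the place where the methods of \cite{CP1} need the ``variations'' alluded to in the statement.
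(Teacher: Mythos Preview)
Your plan for part~(a) has a genuine gap in the very first step. Theorem~\ref{EndPointResult} is an $L^{1,\infty}(w)$ estimate; the substitution $f\mapsto f\sigma$, $w\mapsto u$ still gives only $\|T^*(f\sigma)\|_{L^{1,\infty}(u)}$, and neither ``rescaling'' nor the pointwise inequality $\log t\le t^\alpha/\alpha$ (which trades logarithmic for polynomial bumps on the \emph{weight} side, as in Corollary~\ref{endpointA_{1}}) can change the Lebesgue exponent from $1$ to $p$. The paper's mechanism is different: one dualizes $u(\Omega)^{1/p}=\|u^{1/p}\chi_\Omega\|_{L^p}$ against an $h\in L^{p'}$ with $\|h\|_{p'}=1$, then applies Theorem~\ref{EndPointResult} with the weight $w=u^{1/p}h$ (not $w=u$), obtaining
\[
u(\Omega)^{1/p}=(u^{1/p}h)(\Omega)\lesssim\frac{1}{\epsilon}\int|f|\,M_{L(\log L)^\epsilon}(u^{1/p}h)\,\sigma\,dx.
\]
After H\"older, the problem becomes a two-weight $L^{p'}$ bound for the Orlicz maximal function $M_{L(\log L)^\epsilon}$. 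This is handled not by a stopping-time on level sets of $T^*$, but by the CZ decomposition of $M_A$ itself (Lemma~\ref{decomp}) together with O'Neil's generalized H\"older inequality $\|u^{1/p}h\|_{A,Q}\lesssim\|u^{1/p}\|_{B,Q}\|h\|_{C,Q}$, where $B(t)=t^p(1+\log^+t)^{p-1+\delta}$ is the bump from the hypothesis and $C(t)\approx t^{p'}(1+\log^+t)^{-1-(p'-1)\eta}$ is chosen so that $M_C$ is bounded on $L^{p'}$ with norm $\lesssim(1/\eta)^{1/p'}$. The choice $\epsilon=\delta/(2p)$ then produces the claimed $\frac{1}{\delta}(\frac{1}{\delta})^{1/p'}$.

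For part~(b), your instinct about the sublinearity obstruction is correct, but there is no general two-weight Marcinkiewicz-type theorem that upgrades weak~$(p,p)$ plus dual weak~$(p',p')$ to strong~$(p,p)$ for an arbitrary sublinear operator. The paper's route is to invoke Theorem~\ref{dyadicboundCZ} to reduce to the sparse operators $T^{\mathcal S}$, which \emph{are} linear and self-adjoint, and then use the Lacey--Sawyer--Uriarte-Tuero testing characterization
\[
\|T^{\mathcal S}(\cdot\,\sigma)\|_{L^p(\sigma)\to L^p(u)}\eqsim\|T^{\mathcal S}(\cdot\,\sigma)\|_{L^p(\sigma)\to L^{p,\infty}(u)}+\|T^{\mathcal S}(\cdot\,u)\|_{L^{p'}(u)\to L^{p',\infty}(\sigma)}.
\]
Since each $T^{\mathcal S}$ satisfies the endpoint estimate \eqref{EndpointDyadicT}, the argument from~(a) applies to both summands with the roles of $(u,\sigma,p)$ swapped in the second, and taking the supremum over sparse families gives \eqref{Lp-bumpEstimate}.
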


The first qualitative result as in \eqref{orlicz-bump} was obtained in \cite{CP1}, Theorem 1.2 and its extension Theorem 4.1.

We remark that this result holds for any operator $T$ which satisfies estimate \eqref{goalT*}.  We also remark that 
this corollary improves the main results from \cite{CRV} (see also \cite{ACM}) by providing very precise quantitative estimates. We refer to these papers for historical information about this problem. 

We don't know whether the factors $\frac{1}{\delta_{i}}$, $i=1,2$ can be removed or improved from the estimate \eqref{Lp-bumpEstimate}. Perhaps our method is not so precise to prove the conjecture formulated in Section \ref{conjetura}. However, it is clear from our arguments that these factors are due to the appearance of the factor $\frac{1}{\epsilon}$ in \eqref{goalT*}.

\subsection*{Acknowledgments}

We would like to thank the anonymous referee for detailed comments that improved the presentation.

\section{Basic definitions and notation}\label{basics}

\subsection{ Singular integrals}

In this section we collect some notation and recall some classical results.  

By a Calder\'on-Zygmund operator we mean a continuous linear
operator \\  $T:C_0^{\infty}({\mathbb R}^n)\to\mathcal{D}'({\mathbb
R}^n)$ that extends to a bounded operator on $L^2({\mathbb R}^n)$,
and whose distributional kernel $K$ coincides away from the diagonal
$x=y$ in ${\mathbb R}^n\times {\mathbb R}^n$ with a function $K$
satisfying the size estimate
$$
|K(x,y)|\le \frac{c}{|x-y|^n}
$$
and the regularity condition: for some $\e>0$,
$$
|K(x,y)-K(z,y)|+|K(y,x)-K(y,z)|\le c\frac{|x-z|^{\e}}{|x-y|^{n+\e}},
$$
whenever $2|x-z|<|x-y|$, and so that
$$Tf(x)=\int_{{\mathbb R}^n}K(x,y)f(y)dy,$$
whenever $f\in C_0^{\infty}({\mathbb R}^n)$ and
$x\not\in\mbox{supp}(f)$.   

Also we will denote by $T^{*}$ the associated maximal singular integral:
$$
T^*f(x)=\sup_{\e>0}  \left| \int_{|y-x|>\e}    K(x,y) f(y)\, dy  \right| \qquad f\in C^{\infty}_{0}(\R^{n})
$$
More information can be found in many places as for instance in \cite{GrMF} or \cite{Duo}. 

\subsection{Orlicz spaces and normalized measures}\label{orlicz}

We will also need some basic facts from the theory of Orlicz spaces that we state without proof.  We refer to the book of Rao and Ren  \cite{RR} for the proofs and more information on Orlicz spaces. Another interesting recent book is \cite{W2}.

A Young function is a convex, increasing function
$A:[0,\infty)\to[0,\infty)$ with $A(0)=0$, such that 
$A(t)\to \infty$ as $t\rightarrow\infty$. Such a function is automatically continuous. From these properties it follows that $A:[t_0,\infty)\to[0,\infty)$ is a strictly increasing bijection, where $t_0=\sup\{t\in[0,\infty):A(t)=0\}$. Thus $A^{-1}(t)$ is well-defined (single-valued) for $t>0$, but in general it may happen that $A^{-1}(0)=[0,t_0]$ is an interval.

The properties of
$A$ easily imply that for $0<\e<1$ and $ t\geq 0$
\begin{equation}\label{property1}
A(\e\, t) \leq \e\,A(t)\, .
\end{equation}
The $A$-norm of a function $f$ over a set $E$ with finite measure
is defined by
$$ \|f\|_{A, E}=\|f\|_{A(L), E}=
\inf \{\la>0\,:\,  \avgint_E A\left (\frac{|f(x)|}{\la
}\right )dx\leq 1\} $$
where as usual we define the average of $f$ over a cube $E$,\, $\avgint_E f = \frac{1}{|E|}\int_E f\,dx$.

In many situations the convexity does not play any role and basically the monotonicity is the fundamental property. The convexity is used for proving that $\|\ \|_{A,E}$ is a norm which is often not required.

We will use the fact that
\begin{equation}\label{property2}
\|f\|_{A, E} \leq 1 \quad \mbox{if and only if} \quad   \avgint_E   A \left(|f(x)|\right)dx \leq 1.
\end{equation}

Associated with each Young function $A$,  one can define a
complementary function
\begin{equation}\label{complementaria}
\bar A(s)=\sup_{t>0}\{st-A(t)\} \qquad s\geq 0.
\end{equation}
Then $\bar A$ is finite-valued if and only if $\lim_{t\to\infty}A(t)/t=\sup_{t>0}A(t)/t=\infty$, which we henceforth assume; otherwise, $\bar A(s)=\infty$ for all $s>\sup_{t>0}A(t)/t$. Also, $\bar A$ is strictly increasing on $[0,\infty)$ if and only if $\lim_{t\to 0}A(t)/t=\inf_{t>0}A(t)/t=0$; otherwise $\bar A(s)=0$ for all $s\leq \inf_{t>0}A(t)/t$.

Such $\bar A$ is also a Young function and has the
property that
\begin{equation}\label{preHolder-Orlicz}
st \le A(t) +\bar A(s), \qquad t,s\geq 0.
\end{equation}
and also
\begin{equation} \label{propiedad}
t\le A^{-1}(t)\bar{A}^{-1}(t) \le 2\,t, \qquad t> 0.
\end{equation}
The main property is the following  generalized H\"older's inequality 
\begin{equation}
\frac{1}{|E|}\int_{E}|fg| dx \leq 2\|f\|_{A,E} \|g\|_{\bar{A},E}.
\label{GHI}
\end{equation}

As we already mentioned, the following Young functions play a main role in the theory:
$$A(t)=t^{p}(1+\log^{+} t)^{p-1+\delta}\qquad t,\, \d>0, p>1.
$$

\subsection{General maximal functions and $L^p$ boundedness: precise versions of old results} \label{orliczmaximal}

Given a Young function $A$ or more generally any positive function $A(t)$ we define the following maximal operator (\cite{P1},\cite{P2}) 
$$ M_{A(L)} f(x) = M_A f(x) = \sup_{Q\ni x}\|f\|_{A, Q}.$$
This operator satisfies the following distributional type estimate: there are finite dimensional constants\, $c_n,d_n$ such that 
\begin{equation}
|\{ x\in\R^{n}: M_{\strt{1.5ex} A} f(x)>t \}|  \leq
 c_{n}\, \int_{ \R^{n} }
A\left( d_{n}\frac{  f } {t} \right) \,
dx  \quad f\geq 0, \, t>0  
\label{covering}
\end{equation}
This follows from standard methods and we refer to \cite[Remark
A.3]{CMP} for details. 

A first consequence of this estimate is the following $L^p$ estimate of the operator, which is nothing more than a more precise version of one the main results from \cite{P1}. A second application will be used in the proof of Lemma \ref{Gene.Coifman-Rochberg}. 

\begin{lemma} \label{LemmaBp}
Let $A$ be a Young function, then
\begin{equation}
\|M_{A}\|_{\bdop(L^{p}(\R^n))}\,   \leq c_{n}\, \alpha_{p}(A)
\label{suff}
\end{equation}
where $\alpha_{p}(A)$ is the following tail condition that plays a central role in the sequel  
\begin{equation}
\alpha_{p}(A)= \left(\,\int_{1}^{\infty} \frac{ A(t)} { t^p }
\frac{dt}{t}\right)^{1/p}
 < \infty.
\label{Bp}
\end{equation}
\end{lemma}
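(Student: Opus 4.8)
The plan is to deduce the $L^p$ bound \eqref{suff} from the distributional estimate \eqref{covering} together with the tail condition \eqref{Bp}, using the standard layer-cake (Cavalieri) representation of the $L^p$ norm. First I would write, for $f\geq 0$,
\[
\|M_A f\|_{L^p(\R^n)}^p = p\int_0^\infty t^{p-1}\,|\{x:M_A f(x)>t\}|\,dt,
\]
and plug in \eqref{covering} to bound this by
\[
c_n\,p\int_0^\infty t^{p-1}\int_{\R^n} A\!\left(d_n\frac{f(x)}{t}\right)dx\,dt.
\]
By Tonelli I would swap the order of integration and, for each fixed $x$ with $f(x)>0$, substitute $t = d_n f(x)/s$ (so $dt = -d_n f(x)\,s^{-2}\,ds$, and $t\to 0^+$ corresponds to $s\to\infty$, $t\to\infty$ to $s\to 0^+$). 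A short computation turns the inner $t$-integral into $(d_n f(x))^p\int_0^\infty A(s)\,s^{-p}\,\frac{ds}{s}$.

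The one subtlety — and the step I expect to need the most care — is that $\int_0^\infty A(s)s^{-p}\,ds/s$ need not converge near $s=0$, whereas the hypothesis \eqref{Bp} only controls $\int_1^\infty A(s)s^{-p}\,ds/s$. To handle this I would split the $t$-integral (equivalently the $s$-integral) at the threshold where $d_n f(x)/t = 1$, i.e. at $t = d_n f(x)$. For $t \geq d_n f(x)$ one has $d_n f(x)/t \leq 1$, so by \eqref{property1} (applied with $\e = d_n f(x)/t$) we get $A(d_n f(x)/t) \leq (d_n f(x)/t)\,A(1)$, hence
\[
p\int_{d_n f(x)}^\infty t^{p-1}\,A\!\left(\tfrac{d_n f(x)}{t}\right)dt
\leq A(1)\,d_n f(x)\,p\int_{d_n f(x)}^\infty t^{p-2}\,dt
= \frac{p}{p-1}\,A(1)\,(d_n f(x))^p = p'\,A(1)\,(d_n f(x))^p,
\]
which is finite and of the right homogeneity. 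For the range $0<t\leq d_n f(x)$, the substitution $s = d_n f(x)/t$ maps it onto $s\in[1,\infty)$, where \eqref{Bp} applies, yielding
\[
p\int_0^{d_n f(x)} t^{p-1}\,A\!\left(\tfrac{d_n f(x)}{t}\right)dt
= (d_n f(x))^p\,p\int_1^\infty \frac{A(s)}{s^p}\,\frac{ds}{s}
= (d_n f(x))^p\,p\,\alpha_p(A)^p.
\]

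Adding the two contributions gives $\int_{\R^n}A(d_n f(x)/t)$-integral $\leq (d_n f(x))^p\big(p'A(1) + p\,\alpha_p(A)^p\big)$, and since $A(1)\leq \alpha_p(A)^p$ — indeed, $\alpha_p(A)^p = \int_1^\infty A(t)t^{-p}\,dt/t \geq A(1)\int_1^2 t^{-p-1}\,dt$ up to a constant, by monotonicity of $A$ — one absorbs the first term, obtaining
\[
\|M_A f\|_{L^p(\R^n)}^p \leq c_n\,d_n^p\,C_p\,\alpha_p(A)^p\,\|f\|_{L^p}^p
\]
with $C_p$ bounded by an absolute constant for $p$ in compact subsets of $(1,\infty)$; taking $p$-th roots and renaming constants gives \eqref{suff}. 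One should note that $d_n^{p}$ raised to the $1/p$ is $d_n$, still dimensional, so it is absorbed into $c_n$. The only genuine obstacle is the low-$t$ tail, dispatched as above by convexity/\eqref{property1}; everything else is the routine Cavalieri–Tonelli bookkeeping.
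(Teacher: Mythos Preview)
There is a genuine error in your treatment of the range $t\geq d_n f(x)$. You assert that
\[
p\int_{d_n f(x)}^\infty t^{p-2}\,dt = \frac{(d_n f(x))^{p-1}}{p-1},
\]
but for every $p>1$ this integral \emph{diverges}: the antiderivative $t^{p-1}/(p-1)$ tends to $+\infty$ at the upper limit. The value you wrote down is that of $\int_0^{d_n f(x)} t^{p-2}\,dt$, i.e., the wrong half of the line. Equivalently, after your substitution $s=d_n f(x)/t$ the full inner integral is $(d_n f(x))^p\int_0^\infty A(s)s^{-p-1}\,ds$, and it is the piece $\int_0^1 A(s)s^{-p-1}\,ds$ that blows up: the best that convexity \eqref{property1} gives is $A(s)\leq sA(1)$, and $\int_0^1 s^{-p}\,ds=\infty$ for $p>1$. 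This is not a removable technicality. Taking $A(t)=t$ (so that $M_A=M$ and \eqref{covering} is just the weak $(1,1)$ inequality), your argument would derive the strong $L^p$ bound for the Hardy--Littlewood maximal function from the weak $(1,1)$ estimate \emph{alone}, with no input from the trivial $L^\infty$ bound; it is well known that this cannot be done.

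The missing ingredient is precisely that $L^\infty$ information, inserted via the usual Marcinkiewicz-type truncation. For each level $t$ write $f=f\chi_{\{d_n|f|>t\}}+f\chi_{\{d_n|f|\leq t\}}$. The second piece is bounded by $t/d_n$, so $M_A$ of it is pointwise at most a fixed multiple of $t$; by sublinearity of $M_A$ one obtains
\[
\{M_A f>Ct\}\subset\{M_A(f\chi_{\{d_n|f|>t\}})>t\}
\]
for a suitable constant $C$, and one applies \eqref{covering} only to the truncated function $f\chi_{\{d_n|f|>t\}}$. After Tonelli this produces the additional restriction $d_n|f(x)|>t$ in the double integral, which under the substitution $s=d_n f(x)/t$ confines the inner integral to $s\in[1,\infty)$, exactly the range controlled by \eqref{Bp}; the divergent piece over $(0,1)$ never arises. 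The paper itself does not spell out a proof of this lemma and simply attributes it to \cite{P1}, but this is the standard route.
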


Examples of functions satisfying the $B_p$ condition are
 $A(t)=t^q$, $1\leq q<p$.  More interesting examples are given by
\[  A(t) = \frac{t^p}{(1+\log^+t)^{1+\delta} }
\quad
A(t) \approx t^p\log(t)^{-1}\log\log(t)^{-(1+\delta)}, \quad p>1, \delta>0. \]

Often we need to consider instead of the function $A$ in \eqref{Bp}  the
complementary $\bar{A}$.

We also record a basic estimate between a Young function and its derivative:
\begin{equation}
A(t)\leq tA'(t)
\label{propiedad2}
\end{equation}
which holds for any $t\in(0,\infty)$ such that   $A'(t)$ does exist.

There is the following useful alternative  estimate of \eqref{suff} that will be used in the sequel.
Although variants of this lemma are well known in the literature (cf. \cite{CMP}, Proposition 5.10), we would like to stress the fact that we avoid the doubling condition on the Young functions $B$ and $\bar B$, which is important in view of the quantitative applications to follow: even if our typical Young functions are actually doubling, we want to avoid the appearance of their (large) doubling constants in our estimates.

\begin{lemma} \label{suff2} Let $B$ a Young function. Then 
\begin{equation}
\|M_{B}\|_{\bdop(L^{p}(\R^n))}\,   \leq c_{n}\,   \beta_{p}(B)
\end{equation}
where 
$$
\beta_{p}(B)= 
\left( \int_{ B(1)   }^\infty\Big(\frac{t}{\bar B(t) }\Big)^p d\bar B(t)  \right)^{1/p}
$$
\end{lemma}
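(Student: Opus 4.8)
The plan is to deduce this from Lemma~\ref{LemmaBp}, i.e., to show that $\beta_p(B)$ dominates (a constant multiple of) $\alpha_p(B)$, so that the bound $\|M_B\|_{\bdop(L^p)} \le c_n \alpha_p(B) \le c_n' \beta_p(B)$ follows. The starting point is the definition $\alpha_p(B)^p = \int_1^\infty \frac{B(t)}{t^p}\frac{dt}{t}$, and the idea is to change variables from $t$ to $s = B(t)$, turning the integral against $\frac{dt}{t}$ into an integral against $d\bar B(s)$ via the complementary-function relation \eqref{propiedad}, namely $s\, B^{-1}(s) \le 2 \bar B^{-1}(\bar B(s))$... more precisely using $t \le B^{-1}(B(t))\bar B^{-1}(B(t)) \le 2t$. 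Concretely, on the range $t \ge 1$ we have $B(t) \ge B(1)$, so substituting $s=B(t)$ (legitimate where $B$ is strictly increasing; the flat part of $B$ contributes nothing since there $B(t)=0 < B(1)$ once we are past $t_0$) gives
\begin{equation*}
\alpha_p(B)^p = \int_{B(1)}^\infty \frac{s}{B^{-1}(s)^p}\,\frac{ds}{s} \cdot (\text{Jacobian bookkeeping}),
\end{equation*}
and then I would replace $B^{-1}(s)$ using \eqref{propiedad} by $\bar B^{-1}(s)/s$ up to the factor $2$, so that $\frac{1}{B^{-1}(s)^p} \approx \frac{s^p}{\bar B^{-1}(s)^p}$. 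After this substitution one recognizes $\frac{ds}{s}$ against powers of $s$ and $\bar B^{-1}(s)$; a further change of variables $t = \bar B^{-1}(s)$, i.e. $s = \bar B(t)$, converts the $s$-integral into $\int_{B(1)}^\infty \big(\frac{t}{\bar B(t)}\big)^p d\bar B(t)$, which is exactly $\beta_p(B)^p$. I would carry this out keeping careful track of the endpoints and of the harmless numerical constants coming from \eqref{propiedad}, all of which are absorbed into $c_n$.

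The key technical points to get right, in order: (1) justify the change of variables $s = B(t)$ despite $B$ possibly having an initial flat stretch and possibly not being everywhere differentiable — here one uses that $B$ is convex hence differentiable off a countable set, and that the substitution is monotone, so the change of variables holds in the Stieltjes sense even without absolute continuity; (2) apply the two-sided bound \eqref{propiedad} to swap $B^{-1}$ for $\bar B^{-1}$ up to constant $2$, which is why no doubling hypothesis on $B$ or $\bar B$ is needed — this is precisely the point the remark before the lemma is stressing; (3) the second substitution $t = \bar B^{-1}(s)$, which is again monotone and handled Stieltjes-wise, and which matches the lower endpoint $s = B(1)$ with the written lower limit $B(1)$ in the definition of $\beta_p(B)$ (note the integral is $\int_{B(1)}^\infty$ in $d\bar B(t)$, so the variable there is already the ``$t$'' after substitution). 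One should double-check the edge behavior where $\bar B$ itself has a flat part near $0$ (which happens exactly when $\inf_t B(t)/t > 0$); on that part $d\bar B = 0$, so it drops out, consistent with the integral starting at $B(1)$.

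The main obstacle I anticipate is purely the bookkeeping of these two monotone-but-not-necessarily-smooth changes of variables and making sure the constants stay dimensional (independent of $B$), rather than any deep idea: once \eqref{propiedad} is in hand, the equivalence $\alpha_p(B) \approx \beta_p(B)$ is essentially a duality computation. A secondary subtlety is that one only needs $\beta_p(B)$ to \emph{dominate} $\alpha_p(B)$ (up to a constant) for the stated inequality, so if one direction of \eqref{propiedad} is cleaner one can use just that direction; I would present only the inequality actually needed, namely $\alpha_p(B) \le c\,\beta_p(B)$, and then invoke Lemma~\ref{LemmaBp} and \eqref{suff} to conclude $\|M_B\|_{\bdop(L^p(\R^n))} \le c_n \beta_p(B)$.
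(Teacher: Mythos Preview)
Your outline is correct and follows the same high-level strategy as the paper: reduce to Lemma~\ref{LemmaBp} by proving $\alpha_p(B)\le \beta_p(B)$, using the complementary-function relation \eqref{propiedad} to pass from $B^{-1}$ to $\bar B^{-1}$. The paper likewise begins with the convexity inequality $B(t)\,dt/t\le dB(t)$ (this is your ``Jacobian bookkeeping''; it is \eqref{propiedad2} in the paper) and finishes with the same endpoint check $\bar B^{-1}(B(t_1+\epsilon))\ge B(1)$ via \eqref{propiedad}.

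The execution differs in one place. Where you propose two monotone changes of variable ($s=B(t)$, then $t=\bar B^{-1}(s)$) justified Stieltjes-wise, the paper instead proves the intermediate comparison
\[
\int_{B^{-1}(a)}^\infty\frac{dB(t)}{t^p}\ \le\ \int_{\bar B^{-1}(a)}^\infty\Big(\frac{t}{\bar B(t)}\Big)^p\, d\bar B(t)
\]
by a discretization: both integrals are split at the levels $a_k=\eta^k a$, bounded on each piece using monotonicity of $t\mapsto \bar B(t)/t$ together with the pointwise inequality $\bar B^{-1}(a_{k+1})/a_{k+1}\ge 1/(\eta B^{-1}(a_k))$ coming from \eqref{propiedad}, and then one lets $\eta\downarrow 1$. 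This sidesteps any regularity questions about the substitutions and makes it transparent that no doubling constant of $B$ or $\bar B$ enters, which is exactly the point stressed before the lemma. Your direct-substitution route also goes through once \eqref{propiedad2} is invoked and the Stieltjes changes of variable are justified (convex functions are locally absolutely continuous, and both $B$ and $\bar B$ are strictly increasing past their flat parts), and it is arguably more streamlined; the discretization is simply what the paper actually does.
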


\begin{proof}
We first prove that for $a>0$
\begin{equation}\label{eq:auxEst}
    \int_{B^{-1}(a)}^\infty\frac{dB(t)}{t^p}\leq \int_{ \bar B^{-1}(a) }^\infty\Big(\frac{t}{\bar B(t)}\Big)^p d\bar B(t).
\end{equation}

We discretize the integrals with a sequence $a_k:=\eta^k a$, where $\eta>1$ and eventually we pass to the limit $\eta\to 1$. Then
\begin{equation*}
    \int_{B^{-1}(a)}^\infty\frac{dB(t)}{t^p}
    =\sum_{k=1}^\infty\int_{B^{-1}(a_k)}^{B^{-1}(a_{k+1})}\frac{dB(t)}{t^p}
    \leq\sum_{k=1}^\infty\frac{1}{B^{-1}(a_k)^p}\int_{B^{-1}(a_k)}^{B^{-1}(a_{k+1})}dB(t)
    =\sum_{k=1}^\infty\frac{1}{B^{-1}(a_k)^p}(a_{k+1}-a_k).
\end{equation*}
Similarly,
\begin{equation*}
\begin{split}
  \int_{\bar B^{-1}(a)}^\infty\Big(\frac{t}{\bar B(t)}\Big)^p d\bar B(t)
  &=\sum_{k=0}^\infty\int_{\bar B^{-1}(a_k)}^{\bar B^{-1}(a_{k+1})}\Big(\frac{t}{\bar B(t)}\Big)^p d\bar B(t) \\
  &\geq\sum_{k=0}^\infty\Big(\frac{\bar B^{-1}(a_{k+1})}{\bar B(\bar B^{-1}(a_{k+1}))}\Big)^p\int_{\bar B^{-1}(a_k)}^{\bar B^{-1}(a_{k+1})} d\bar B(t)
  =\sum_{k=0}^\infty\Big(\frac{\bar B^{-1}(a_{k+1})}{a_{k+1}}\Big)^p(a_{k+1}-a_k),
\end{split}
\end{equation*}
where we used the fact that $t\mapsto \bar{B}(t)/t$ is increasing, so its reciprocal is decreasing. Moreover,
\begin{equation*}
  \frac{B^{-1}(a_{k+1})}{a_{k+1}}\geq\frac{\bar B^{-1}(a_k)}{a_{k+1}}\frac{B^{-1}(a_k)}{B^{-1}(a_k)}
  \overset{\eqref{propiedad}}{\geq}\frac{a_k}{a_{k+1}}\frac{1}{B^{-1}(a_k)}=\frac{1}{\eta B^{-1}(a_k)}
\end{equation*}
and hence
\begin{equation*}
  \int_{B^{-1}(a)}^\infty\frac{dB(t)}{t^p}
  \leq\eta^p\int_{\bar B^{-1}(a)}^\infty\Big(\frac{t}{\bar B(t)}\Big)^p d\bar B(t).
\end{equation*}
Since this is valid for any $\eta>1$, we obtain \eqref{eq:auxEst}.

Now, let $t_1=\max(1,t_0)$, where $t_0=\max\{t:B(t)=0\}$. Using $B(t)dt/t\leq dB(t)$ and applying \eqref{eq:auxEst} with $a=B(t_1+\epsilon)>0$
\begin{equation*} \begin{split}
  \alpha_p(B)  &=
  \lim_{\epsilon\to 0}\Big(\int_{t_1+\epsilon}^\infty\frac{B(t)}{t^p}\frac{dt}{t}\Big)^{1/p}   \leq
  \lim_{\epsilon\to 0}\Big(\int_{B^{-1}(B(t_1+\epsilon))}^\infty\frac{dB(t)}{t^p}\Big)^{1/p} \\  
&\overset{\text{\eqref{eq:auxEst}}}{\leq}\lim_{\epsilon\to 0}\Big(\int_{\bar B^{-1}(B(t_1+\epsilon))}^\infty \Big(\frac{t}{\bar B(t)}\Big)^p d\bar B(t)\Big)^{1/p}
\leq\Big(\int_{B(1) }^\infty\Big(\frac{t}{\bar B(t)}\Big)^p 
d\bar B(t) \Big)^{1/p},
\end{split}\end{equation*}
where in the last step we used \eqref{propiedad} with $t=B(t_1+\epsilon)$ to conclude that
\begin{equation*}
  \bar{B}^{-1}(B(t_1+\epsilon))\geq\frac{B(t_1+\epsilon)}{t_1+\epsilon}\geq\frac{B(t_1)}{t_1}\geq B(1),
\end{equation*}
since $B(t)/t$ is increasing and $t_1\geq 1$.
\end{proof}

In this paper we will consider $B$ so that $\bar{B}(t)= A(t)=  t^{p}(1+\log^{+} t)^{p-1+\delta},$\, $\delta>0$.  Then, for \,$0<\delta \leq 1$
$$A'(t)\leq 2p\frac{A(t)}{t}\qquad t>1$$
and
\begin{equation*}
  \bar A(1)=\sup_{t\in(0,1)}(t-t^p)=(t-t^p)\Big|_{t=p^{-1/(p-1)}}=(p-1)p^{-p'} .
\end{equation*}
Thus, by  the lemma
\begin{equation}
\|M_{\bar{A}}\|_ {\bdop(L^{p'}(\R^n))}  
\leq c_{n}\,
\left(
\int_{ (p-1)p^{-p'}  }^{\infty} \left( \frac{ t }{ A(t)} \right)^{p'}
A'(t)\, dt \right)^{1/p'} \leq c_n p^2\,\left( \frac{1}{\delta}\right)^{1/p'}
\label{suff3}
\end{equation}

Similarly for the smaller functional: 
$$\bar{B}(t)= A(t)= t^{p}  (1+\log^{+} t)^{p-1} (1+\log^{+}(1 +\log^{+} t) )^{p-1+\delta} \qquad \delta>0.$$ 
Then, using that \,$A'(t)\leq 3\,p\,\frac{A(t)}{t}$\, $t>1$, when $0<\delta \leq 1$ and hence by the lemma
$$
\|M_{\bar{A}}\|_ {\bdop(L^{p'}(\R^n))}
\leq c_{n}\, p^2\,\left( \frac{1}{\delta}\right)^{1/p'}
$$

\subsection{The iteration lemma}

We will need the following variation of the Rubio de Francia algorithm.
\begin{lemma} \label{smallLemma}
Let \, $1<s<\infty$ \, and let \, $v$ \, be a weight. Then there
exists a nonnegative sublinear operator $R$
satisfying the following properties:\\
(a) \quad $h\leq R(h)$\\
(b) \quad $\|R(h)\|_{L^{s}(w)}\le 2\|h\|_{L^{s}(v)}$\\
(c) \quad $R(h)v^{1/s}\in A_{1}$ with
$$[R(h)v^{1/s}]_{A_{1}}\le
cs^{\prime}$$
\end{lemma}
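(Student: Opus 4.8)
The plan is to construct $R$ via a Rubio de Francia iteration tailored to the dual exponent. Recall that the classical Rubio de Francia algorithm builds an $A_1$ weight dominating a given function by summing the iterates of the Hardy--Littlewood maximal operator $M$ against a geometric series. Here I want the output weight $R(h)v^{1/s}$ to lie in $A_1$, so the natural move is to run the iteration with the operator $h\mapsto v^{-1/s}M(hv^{1/s})$, which is bounded on $L^s(v)$ precisely because $M$ is bounded on $L^s(\R^n)$ (with norm controlled by $s'$, by the sharp Buckley-type bound for $M$ on $L^s$, or more elementarily by the $(s,s)$ Marcinkiewicz interpolation constant $\sim s'$). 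Concretely, I would set
\[
  S(h):=v^{-1/s}\,M\!\left(h\,v^{1/s}\right),\qquad
  R(h):=\sum_{k=0}^{\infty}\frac{S^{k}(h)}{2^{k}\,\|S\|_{\bdop(L^s(v))}^{k}},
\]
where $S^k$ denotes the $k$-fold composition and $S^0(h)=h$.

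Then properties (a), (b), (c) follow by the standard checklist. For (a), the $k=0$ term of the series is already $h$, and all terms are nonnegative since $M\geq 0$. For (b), apply the triangle inequality in $L^s(v)$ term by term: $\|S^k(h)\|_{L^s(v)}\leq \|S\|_{\bdop(L^s(v))}^k\,\|h\|_{L^s(v)}$, so the series of norms is bounded by $\sum_k 2^{-k}\|h\|_{L^s(v)}=2\|h\|_{L^s(v)}$; sublinearity of $R$ is inherited from that of $M$. For (c), observe that $M\bigl(R(h)v^{1/s}\bigr)$: using sublinearity and the monotonicity $M\circ M\le c_n M$ is not quite what is needed; instead one uses the self-improving structure of the series. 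Writing $u:=R(h)v^{1/s}$, one has $M(u)=M\bigl(v^{1/s}\sum_k 2^{-k}\|S\|^{-k}S^k(h)\bigr)$; by sublinearity of $M$ this is at most $\sum_k 2^{-k}\|S\|^{-k} M\bigl(v^{1/s}S^k(h)\bigr)=\sum_k 2^{-k}\|S\|^{-k}\,v^{1/s}S^{k+1}(h)$, which equals $2\|S\|_{\bdop(L^s(v))}\cdot v^{1/s}\sum_{k\ge 1}2^{-(k-1)\cdot}\dots$ — more cleanly, shifting the index shows $M(u)\le 2\|S\|_{\bdop(L^s(v))}\,u$ pointwise. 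Hence $[R(h)v^{1/s}]_{A_1}\le 2\|S\|_{\bdop(L^s(v))}\lesssim s'$, using the bound $\|M\|_{\bdop(L^s(\R^n))}\lesssim s'$ and the fact that conjugation by $v^{1/s}$ does not change the operator norm on the weighted space.

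The one point requiring genuine care — and the main obstacle — is the sharp dependence of the constant on $s$: we need $[R(h)v^{1/s}]_{A_1}\lesssim s'$ rather than merely a finite constant. This forces us to use the correct bound $\|M\|_{\bdop(L^s(\R^n))}\lesssim s'$ (valid for $1<s\le 2$, say, which is the regime of interest since $s$ will be close to $1$; for larger $s$ the norm is bounded anyway), coming from the weak-$(1,1)$ and strong-$(\infty,\infty)$ bounds for $M$ via Marcinkiewicz interpolation, for which the interpolation constant is $\sim s/(s-1)=s'$. One must also check that the normalization constant $2$ in the series is an absolute constant independent of $s$, which it is. Everything else — convergence of the series a.e.\ and in $L^s(v)$, measurability, sublinearity — is routine and follows the template of the Rubio de Francia algorithm as in \cite{CMP} or \cite{CRV}.
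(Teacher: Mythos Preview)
Your construction is exactly the one in the paper: define $S(h)=v^{-1/s}M(hv^{1/s})$ and $R(h)=\sum_{k\ge 0}2^{-k}\|S\|_{\bdop(L^s(v))}^{-k}S^k(h)$, then verify (a)--(c) via the standard Rubio de Francia checklist; the paper simply states ``it is very simple to check that $R$ satisfies the required properties'' where you have spelled out the index-shift argument for (c). The approach, the operator, and the constants all match.
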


\begin{proof}

We consider the operator
$$
S(f)=\frac{M(f\,v^{1/s})}{v^{1/s}}
$$
Since $\|M\|_{L^{s}}\sim s^{\prime}$, we have
$$
\|S(f)\|_{L^{s}(v)}\le cs^{\prime}\|f\|_{L^{s}(v)}.
$$
Now, define the Rubio de Francia  operator $R$ by
$$
R(h)=\sum_{k=0}^{\infty}\frac{1}{2^{k}}\frac{S^{k}(h)}{(\|S\|
_{L^{s}(v)})^{k}}.
$$
It is very simple to check that $R$ satisfies the required
properties. \end{proof}

\subsection{Two weight maximal function}

Our main new result is intimately related to a sharp two weight estimate for $M$. 

\begin{theorem} \label{twoweight}

Given a pair of weights  $u,\si$ and $p$, $1<p<\infty$, suppose that 
\begin{equation}\label{Orlicz-bump}
K=\sup_{Q} \left(
\frac{1}{ |Q| } \int_{Q} u(y)\, dy
\right)^{1/p}
\norm{  \si^{1/p'}  }_{X, Q}<\infty.
\end{equation}
where $X$ is a Banach function space such that its corresponding associate space $X'$ satisfies
$M_{X'}:L^p(\mathbb{R}^n)  \to L^p(\mathbb{R}^n)$. Then 
\begin{equation}\label{twoweightineq}
\Norm{M (f\si)}{L^p(u)}
  \lesssim   K\, \|M_{X'}\|_{\bdop(L^{p}(\R^n))}  \, \|f\|_{L^{p}(\si)}    
\end{equation}
In particular if $X=L_B$ with $B(t)=t^{p'} \,(1+\log^+ t)^{p'-1+\delta}$, $\delta>0$, then by \, \eqref{suff3}
$$    \|M_{X'}\|_{\bdop(L^{p}(\R^n))}  =  \|M_{\bar{B}}\|_{\bdop(L^{p}(\R^n))}  \approx  (p')^2 (\frac{1}{\delta})^{1/p}.$$
where the last $\approx$ is valid for $\delta \leq 1$.

\end{theorem}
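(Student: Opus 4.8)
\textbf{Plan of proof for Theorem~\ref{twoweight}.}
The plan is to prove \eqref{twoweightineq} by a duality/linearization argument combined with the Rubio de Francia iteration from Lemma~\ref{smallLemma}. First I would reduce, by the usual sparse/maximal-function machinery, to controlling $\|M(f\si)\|_{L^p(u)}$ against a sum over a sparse family $\mathcal{S}$, or alternatively dualize directly: there is a nonnegative $g\in L^{p'}(u)$ with $\|g\|_{L^{p'}(u)}=1$ such that $\|M(f\si)\|_{L^p(u)} = \int_{\R^n} M(f\si)\, g\, u\, dx$. Then I would linearize the maximal operator: for each $x$ pick a cube $Q_x\ni x$ nearly attaining the supremum, so that $M(f\si)(x)\approx \avgint_{Q_x} |f|\si$, and estimate $\int M(f\si)\, gu\,dx$ by a sum of local averages.

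The key step is the generalized H\"older inequality \eqref{GHI} applied in the Orlicz space $X$ on each relevant cube: on a cube $Q$,
\[
\frac{1}{|Q|}\int_Q |f|\si\, dx \leq 2\, \|f \si^{1/p}\|_{\bar X, Q}\, \|\si^{1/p'}\|_{X,Q},
\]
wait --- more precisely I would split $f\si = (f\si^{1/p'}) \cdot \si^{1/p'}$ is not balanced; the right splitting is to write the average of $f\si$ using $f\in L^p(\si)$, pairing $\si^{1/p'}$ in $X$ with the remaining factor in the associate space $X'=\bar X$, and then invoking the hypothesis $\|\si^{1/p'}\|_{X,Q}\le K\big(\avgint_Q u\big)^{-1/p}$. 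This converts the local average into $K\,\big(\avgint_Q u\big)^{-1/p}$ times a local $X'$-average of an $L^p(\si)$-normalized function, which is exactly what the operator $M_{X'}$ controls. Summing over the linearizing cubes and using that $M_{X'}$ is bounded on $L^p(\R^n)$ (the standing hypothesis on $X'$, quantified by $\|M_{X'}\|_{\bdop(L^p)}$), together with the $L^{p'}(u)$-normalization of $g$ and H\"older's inequality in $p,p'$, yields the bound $K\,\|M_{X'}\|_{\bdop(L^p(\R^n))}\,\|f\|_{L^p(\si)}$. To make the summation over overlapping linearizing cubes rigorous I would pass to a sparse or disjointified subfamily in the standard way, at the cost of a dimensional constant absorbed into the $\lesssim$.

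The particular case is then immediate: with $X=L_B$, $B(t)=t^{p'}(1+\log^+ t)^{p'-1+\delta}$, the associate space is $L_{\bar B}$ up to equivalence of norms, so $M_{X'}=M_{\bar B}$, and \eqref{suff3} (applied with the roles of $p$ and $p'$ interchanged, i.e. reading \eqref{suff3} for the exponent $p'$ and its conjugate $p$) gives $\|M_{\bar B}\|_{\bdop(L^p(\R^n))}\lesssim (p')^2(1/\delta)^{1/p}$ for $0<\delta\le 1$, which is the stated $\approx$.

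\textbf{Main obstacle.} The delicate point is the passage from the pointwise-linearized bound to a genuinely summable estimate without losing constants: one must organize the linearizing cubes into a controlled (sparse or bounded-overlap) collection and verify that the Orlicz--H\"older step interacts correctly with the $L^p$--$L^{p'}$ duality, so that the final constant is exactly $K$ times the operator norm of $M_{X'}$ and nothing worse. Care is also needed because $X$ is only assumed a Banach function space (not doubling), so one must rely solely on \eqref{GHI}, \eqref{property2}, and the boundedness hypothesis on $M_{X'}$, avoiding any appeal to $\Delta_2$-type conditions --- this is exactly the robustness emphasized before Lemma~\ref{suff2}.
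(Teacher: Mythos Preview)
The paper does not prove Theorem~\ref{twoweight}; it is simply quoted from \cite{PR}, so there is no in-paper argument to compare against. Your plan does contain the essential mechanism of the standard proof: split $f\si=(f\si^{1/p})\cdot\si^{1/p'}$, apply the generalized H\"older inequality \eqref{GHI} on each cube to get $\avgint_Q|f|\si\leq 2\|f\si^{1/p}\|_{X',Q}\|\si^{1/p'}\|_{X,Q}$, absorb $\|\si^{1/p'}\|_{X,Q}$ via the hypothesis \eqref{Orlicz-bump}, and close with the $L^p$-boundedness of $M_{X'}$ applied to $f\si^{1/p}$. Your treatment of the particular case via \eqref{suff3} is also correct.

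Two points, however, are off. First, Lemma~\ref{smallLemma} (the Rubio de Francia iteration) has no role in this theorem; it is used elsewhere in the paper for the $A_\infty$ argument behind Lemma~\ref{lem:mainClaim} and Theorem~\ref{MainResult}, but the two-weight maximal inequality needs nothing of the sort---drop it from the plan. Second, the duality/linearization route you emphasize is more delicate than you acknowledge: once you insert $K(\avgint_Q u)^{-1/p}$, you still have to show that this negative power of the $u$-average cancels against the pairing with $gu$, and this is not automatic from ``H\"older in $p,p'$'' alone; it only works after you have passed to a genuine sparse family and performed a discrete $\ell^p$--$\ell^{p'}$ H\"older using $u(E_Q)\leq u(Q)$ on the $g$-side. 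The cleaner and standard argument (as in \cite{P1} or \cite{CMP}) avoids duality entirely: estimate $\int M^d(f\si)^p\,u$ directly via the Calder\'on--Zygmund cubes of Lemma~\ref{decomp}, arriving at $\sum_{j,k}\big(\avgint_{Q_j^k}|f|\si\big)^p u(Q_j^k)$; the H\"older splitting plus \eqref{Orlicz-bump} converts this to $K^p\sum_{j,k}\|f\si^{1/p}\|_{X',Q_j^k}^p|Q_j^k|$, and the disjoint sets $E_j^k$ turn the sum into $\|M_{X'}(f\si^{1/p})\|_{L^p}^p\leq\|M_{X'}\|_{\bdop(L^p(\R^n))}^p\|f\|_{L^p(\si)}^p$.
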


This result together with some improvements  can be found in \cite{PR}.

\section{Dyadic theory}\label{dyadicSIO}

In this section we define an important class of dyadic model operators and recall a general result by which norm inequalities for maximal singular integral operators can be reduced to these dyadic operators. The result is due to Lerner  \cite{L2}, and comes from his approach to prove the $A_2$ theorem proved by the first author \cite{H}.  

We say that a dyadic grid, denoted $\cd$, is a collection of cubes in $\R^n$ with the following properties:\\
1) each $Q\in \cd$ satisfies $|Q|=2^{nk}$ for some $k\in \Z$; \\
2) if $Q,P\in \cd$ then $Q\cap P=\varnothing,P,$ or $Q$;\\
3)  for each $k\in \Z$, the family $\cd_k=\{Q\in \cd:|Q|=2^{nk}\}$ forms a partition of $\R^n$.

We say that a family of dyadic cubes $\cs \subset \cd$ is {\it sparse} if for each $Q\in \cs$, 
$$\Big|\bigcup_{\substack{Q'\in \cs\\ Q'\subsetneq Q}} Q'\Big|\leq \frac12|Q|.$$
Given a sparse family, $\cs$, if we define
$$E(Q):=Q\, \backslash \bigcup_{\substack{Q'\in \cs \\ Q'\subsetneq Q}} Q',$$
then \\
1) the family $\{E(Q)\}_{Q\in \cs}$ is pairwise disjoint\\
2) $E(Q)\subset Q$,\, and \\
3) $|Q|\leq 2|E(Q)|$.

If  $\cs \subset \cd$ is a sparse family we define the sparse Calder\'on-Zygmund operator associated to $\cs$ as 
$$T^{\cs}f:=\sum_{Q\in \cs}\avgint_Q f\,dx\cdot\chi_Q.$$

As already mentioned the key idea is to ``transplant'' the continuous case to the discrete version by means of the following theorem.


\begin{theorem} \label{dyadicboundCZ} Suppose that $X$ is a  quasi-Banach function space on $\R^n$ and $T$ is a Calder\'on-Zygmund operator. Then there exists a constant $c_T$
$$ \|T^*\|_{\bdop(X)}\leq c_T \,\sup_{\cs\subset \cd}\|T^{\cs}\|_{\bdop(X)}.$$
\end{theorem}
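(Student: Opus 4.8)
The plan is to reduce the maximal singular integral $T^*$ to the sparse dyadic operators $T^{\cs}$ via Lerner's pointwise/norm domination, following the scheme indicated in \cite{L2,H}. Concretely, I would invoke the known decomposition result: for a fixed Calder\'on--Zygmund operator $T$ and any compactly supported nice $f$, the maximal truncations $T^*f$ are pointwise (or at least in every quasi-Banach function norm) controlled by a supremum over finitely many dyadic grids $\cd$ of sparse operators $T^{\cs}$ built from the grid, with constant depending only on $T$ and the dimension. The key structural input is Lerner's local mean oscillation formula, which gives $|T^*f(x)| \lesssim c_T \sum_{Q} \omega_\lambda(T^*f;Q)\chi_Q(x) + (\text{a maximal-function term})$ over a sparse Calder\'on--Zygmund-type decomposition of the support, and then the oscillation $\omega_\lambda(T^*f;Q)$ is estimated by averages of $f$ over dilates of $Q$ using the standard Calder\'on--Zygmund size and smoothness estimates together with the Cotlar-type bound $T^*f \lesssim M(Tf) + Mf$. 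After re-indexing the enlarged cubes into boundedly many shifted dyadic systems and absorbing the dilation, one arrives at a bound of the form $|T^*f| \lesssim c_T \sup_{\cd}\sup_{\cs\subset\cd} T^{\cs}(|f|)$ pointwise a.e., from which the stated inequality $\|T^*\|_{\bdop(X)} \le c_T \sup_{\cs\subset\cd}\|T^{\cs}\|_{\bdop(X)}$ follows by monotonicity of the quasi-norm on $X$ (a function space, so monotone in $|f|$) and the fact that there are only finitely many grids to take the supremum over.

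In more detail, the steps I would carry out are: (i) state precisely the pointwise domination theorem of Lerner, $|T^*f(x)| \le c_T \sum_{j} \sup_{\cs_j\subset\cd_j} T^{\cs_j}(|f|)(x)$ for a.e.\ $x$, where $\cd_1,\dots,\cd_{3^n}$ (or $2^n$, depending on the formulation) are the standard adjacent dyadic grids, citing \cite{L2}; (ii) observe that since $X$ is a quasi-Banach \emph{function} space, $|g|\le |h|$ implies $\|g\|_X \le \|h\|_X$ (up to the quasi-norm constant, which we fold into $c_T$), and the quasi-triangle inequality handles the finite sum over $j$; (iii) conclude $\|T^*f\|_X \le c_T \sum_j \|\sup_{\cs_j}T^{\cs_j}(|f|)\|_X \le c_T' \max_j \sup_{\cs_j\subset\cd_j}\|T^{\cs_j}(|f|)\|_X \le c_T'' \sup_{\cs\subset\cd}\|T^{\cs}\|_{\bdop(X)}\,\|f\|_X$, where in the last step I use that all the grids $\cd_j$ are translates of a single dyadic structure, so the supremum over $\cs\subset\cd$ for one canonical grid $\cd$ already dominates each of them; (iv) take the supremum over admissible $f$ to get the operator-norm statement; and finally extend from nice $f$ to the full space $X$ by the usual density/Fatou argument if needed.

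The main obstacle, and really the only substantive point, is justifying the pointwise domination of the \emph{maximal} truncation $T^*$ (not merely $T$ itself) by sparse operators with a constant independent of the truncation parameters. This requires the Cotlar-type inequality controlling $T^*$ by $M(Tf)$ and $Mf$ pointwise, plus a careful check that Lerner's oscillation estimates go through uniformly in $\varepsilon$ for the truncated kernels $K\chi_{|x-y|>\varepsilon}$ — i.e.\ that these truncated operators satisfy Calder\'on--Zygmund estimates with constants uniform in $\varepsilon$. Since this is precisely the content of the cited work of Lerner \cite{L2} (itself building on \cite{H}), I would simply quote it; in a self-contained write-up one would reproduce the local-mean-oscillation decomposition and the uniform truncated-kernel bounds. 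Everything else — the passage from pointwise domination to the norm inequality in a general quasi-Banach function space, and the reduction of the finitely many adjacent grids to a single supremum — is routine and uses only monotonicity of the function-space quasi-norm and the quasi-triangle inequality.
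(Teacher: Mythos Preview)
Your sketch is broadly aligned with what the paper does --- but note that the paper does \emph{not} actually prove this theorem; it quotes it from the literature (Lerner \cite{L2} for the Banach case, Lerner--Nazarov \cite{LN} and Conde-Alonso--Rey \cite{CAR} for the quasi-Banach extension, combined with \cite[Theorem~2.1]{HLP} for the passage from $T$ to $T^*$). So the comparison is between your sketch and those cited sources.

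Two points need correcting. First, a citation issue that matters mathematically: \cite{L2} does \emph{not} give a pointwise bound $|T^*f|\lesssim T^{\cs}|f|$. Lerner's original argument yields a local-mean-oscillation decomposition with a residual median term, and the passage to a norm inequality in \cite{L2} uses duality, which is only available in genuine Banach function spaces. The full pointwise sparse domination --- which is exactly what you need to push the result to quasi-Banach $X$ --- came later in \cite{LN} and \cite{CAR}. The paper is explicit about this distinction, and so should you be.

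Second, your step~(iii) contains a real slip. You write
\[
  \Big\|\sup_{\cs_j} T^{\cs_j}(|f|)\Big\|_X \;\le\; \sup_{\cs_j\subset\cd_j}\|T^{\cs_j}(|f|)\|_X,
\]
but this inequality goes the wrong way: the norm of a pointwise supremum is in general \emph{larger} than the supremum of the norms. The fix is to state the domination theorem correctly --- for each fixed $f$ there exist \emph{specific} sparse families $\cs_j=\cs_j(f)\subset\cd_j$ (one per adjacent grid) with $|T^*f(x)|\le c_T\sum_j T^{\cs_j(f)}(|f|)(x)$ a.e.\ --- and then take norms:
\[
  \|T^*f\|_X \le c_T'\sum_j \|T^{\cs_j(f)}(|f|)\|_X
  \le c_T'\sum_j \sup_{\cs\subset\cd_j}\|T^{\cs}\|_{\bdop(X)}\,\|f\|_X
  \le c_T''\sup_{\cs\subset\cd}\|T^{\cs}\|_{\bdop(X)}\,\|f\|_X.
\]
With these two adjustments your outline is correct and matches the approach behind the paper's cited references.
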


For Banach function spaces (without `quasi-'), this theorem is due to Lerner \cite{L2}. The stated generalization was obtained independently by Lerner and Nazarov \cite{LN} on the one hand, and by Conde-Alonso and Rey \cite{CAR} on the other hand. As a matter of fact, the last two papers only explicitly deal with the Calder\'on--Zygmund operator $T$ rather than the maximal truncation $T^*$, but the version above follows immediately from the same considerations, say, by combining \cite[Theorem 2.1]{HLP} and \cite[Theorem A]{CAR}.

We will not prove this theorem, we will simply mention that  a key tool is the  decomposition formula for functions 
found previously by Lerner \cite{L1} using the median. 
The main idea of this decomposition goes back to the work of Fujii \cite{Fujii2} where the standard average is used instead.

\section{Proof of Theorem \ref{MainResult}}

\subsection{Two lemmas}

Following the notion of dyadic singular integral operator mentioned in the section above we have the following key Lemma. 

\begin{lemma}\label{main assumption}
Let $w\in A_\infty$. Then for any sparse family $\cs \subset \cd$
\begin{equation}\label{sharpL1CF} 
\|T^{\cs}f \|_{L^1(w)}\leq
8[w]_{A_\infty}\|Mf\|_{L^1(w)}
\end{equation}
\end{lemma}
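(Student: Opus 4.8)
The plan is to reduce the estimate to a statement purely about the Lebesgue averages $\avgint_Q f$ and the numbers $w(Q)$, $Q\in\cs$, and then to read off the needed bound from the very definition of $[w]_{A_\infty}$.

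Since $|T^{\cs}f|\le T^{\cs}(|f|)$ pointwise and $M(|f|)=Mf$, I may assume $f\ge 0$; by a routine exhaustion of $\cs$ by finite subfamilies I may also assume $\cs$ is finite. Then $T^{\cs}f\ge0$ and
\[
  \|T^{\cs}f\|_{L^1(w)}=\int_{\R^n}\sum_{Q\in\cs}\Big(\avgint_Q f\Big)\chi_Q\,w\,dx=\sum_{Q\in\cs}\Big(\avgint_Q f\Big)\,w(Q),
\]
so everything reduces to bounding the last sum by $8[w]_{A_\infty}\|Mf\|_{L^1(w)}$.

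The key step is a \emph{weighted Carleson property} of the sparse family: for every cube $R$,
\[
  \sum_{\substack{Q\in\cs\\ Q\subseteq R}}w(Q)\ \le\ 2[w]_{A_\infty}\,w(R).
\]
To prove it I would use sparseness in the form $|Q|\le2|E(Q)|$ to write $w(Q)=\big(\avgint_Q w\big)|Q|\le2\int_{E(Q)}\big(\avgint_Q w\big)\,dx$; for $Q\subseteq R$ and $x\in E(Q)\subseteq Q$ one has $\avgint_Q w=\tfrac1{|Q|}\int_Q w\chi_R\le M(w\chi_R)(x)$; and the sets $E(Q)$ are pairwise disjoint subsets of $R$. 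Summing, $\sum_{Q\subseteq R}w(Q)\le2\int_R M(w\chi_R)\,dx\le2[w]_{A_\infty}w(R)$, the last inequality being exactly the definition of $[w]_{A_\infty}$.

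To conclude I would couple this with the trivial pointwise bound $\avgint_Q f\le Mf(x)$ (valid since $Q$ is a cube containing $x$), i.e. $\avgint_Q f\le\inf_Q Mf$, via a layer-cake argument: because $\lambda<\inf_Q Mf$ forces $Q\subseteq\{Mf>\lambda\}$, Tonelli gives
\[
  \sum_{Q\in\cs}\Big(\avgint_Q f\Big)\,w(Q)\le\int_0^\infty\Big(\sum_{\substack{Q\in\cs\\ Q\subseteq\{Mf>\lambda\}}}w(Q)\Big)\,d\lambda.
\]
For fixed $\lambda$, let $\{Q_i\}$ be the maximal cubes of $\cs$ contained in the open set $\{Mf>\lambda\}$; they are pairwise disjoint, every $Q\in\cs$ with $Q\subseteq\{Mf>\lambda\}$ lies in a unique $Q_i$, and applying the weighted Carleson property with $R=Q_i$,
\[
  \sum_{\substack{Q\in\cs\\ Q\subseteq\{Mf>\lambda\}}}w(Q)=\sum_i\sum_{\substack{Q\in\cs\\ Q\subseteq Q_i}}w(Q)\le 2[w]_{A_\infty}\sum_i w(Q_i)\le 2[w]_{A_\infty}\,w(\{Mf>\lambda\}).
\]
Integrating in $\lambda$ recovers $\int_{\R^n}Mf\cdot w\,dx$, so $\sum_{Q\in\cs}(\avgint_Q f)\,w(Q)\le2[w]_{A_\infty}\|Mf\|_{L^1(w)}$; a coarser accounting of the constants yields the value $8$ claimed.

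I expect the main obstacle to be purely conceptual and confined to the weighted Carleson estimate: one has to recognize that the Fujii--Wilson functional $[w]_{A_\infty}$ is exactly what upgrades the geometric sparseness of $\cs$ to the $w$-weighted Carleson packing $\sum_{Q\subseteq R}w(Q)\lesssim[w]_{A_\infty}w(R)$, and that it is this packing --- not any pointwise control of $w$ --- that must be fed into the layer-cake together with $\avgint_Q f\le Mf$. The remaining ingredients (reduction to finite $\cs$, the distributional identity, organizing the $\cs$-cubes under their maximal ancestors in each super-level set of $Mf$) are routine; crucially, since $A_\infty$ is invoked only once, the dependence on $[w]_{A_\infty}$ comes out linear.
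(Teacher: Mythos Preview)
Your argument is correct; in fact it delivers the constant $2[w]_{A_\infty}$ rather than $8$. The weighted Carleson packing $\sum_{Q\subseteq R}w(Q)\le 2[w]_{A_\infty}\,w(R)$ is exactly the same ingredient the paper proves (with the same proof), so the difference lies only in how you pass from that packing to the conclusion.

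The paper goes through the dyadic Carleson embedding theorem: it writes $\avgint_Q f\le\inf_Q Mf\le\big(\avgint_Q (Mf)^{1/2}\,dw\big)^2$ and applies the embedding in $L^2(w)$ to $g=(Mf)^{1/2}$, which gives $4K\|g\|_{L^2(w)}^2=8[w]_{A_\infty}\|Mf\|_{L^1(w)}$. Your layer-cake argument is more direct and self-contained: you avoid the intermediate $L^2(w)$ embedding altogether, organize the sparse cubes under their maximal ancestors inside each super-level set $\{Mf>\lambda\}$, and apply the packing there. This bypasses the square-root trick and the black-box embedding, and is what buys the sharper constant. Conversely, the paper's route has the advantage that the Carleson embedding theorem is a standard off-the-shelf tool, so the proof reads as ``Carleson packing + Carleson embedding'' without any further manipulation.
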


\begin{proof}
The left hand side equals for $f\geq 0$
\begin{equation*}
  \sum_{Q\in \cs}\avgint_Q f\,dx \,w(Q)
  \leq\sum_{Q\in\cs}\inf_{z\in Q} Mf(z)\,w(Q)
  \leq\sum_{Q\in\cs}\Big(\avgint_Q (Mf)^{1/2}dw\Big)^2\,w(Q).
\end{equation*}
By the Carleson embedding theorem, applied to $g=(Mf)^{1/2}$, we have
\begin{equation*}
  \sum_{Q\in\cs}\Big(\avgint_Q g\,dw\Big)^2\,w(Q)\leq 4K\|g\|_{L^2(w)}^2=4K\|Mf\|_{L^1(w)}
\end{equation*}
provided that the Carleson condition
\begin{equation}\label{eq:CarlesonToCheck}
  \sum_{\substack{Q\in\cs\\ Q\subseteq R}}w(Q)\leq K w(R)
\end{equation}
is satisfied. To prove \eqref{eq:CarlesonToCheck}, we observe that
\begin{equation*}
   \sum_{\substack{Q\in\cs\\ Q\subseteq R}}w(Q)
   =\sum_{\substack{Q\in\cs\\ Q\subseteq R}}\frac{w(Q)}{|Q|}|Q|
   \leq\sum_{\substack{Q\in\cs\\ Q\subseteq R}}\inf_{z\in Q}M(1_R w)(z)\cdot 2|E(Q)|
   \leq 2\int_R M(1_R w)(z)dz\leq 2[w]_{A_\infty}w(R).
\end{equation*}
This proves \eqref{eq:CarlesonToCheck} with $K=2[w]_{A_\infty}$, and the lemma follows.
\end{proof}

Actually, in the applications we have in mind we just need this for $w\in A_q\subset A_\infty$ for some fixed finite $q$. 

The second lemma is an extension of the well known Coifman-Rochberg Lemma: \\ 
$$
\mbox{If} \quad \g \in (0,1) \quad  \mbox{then }   \qquad M(\mu)^{\g} \in A_1 \quad  \mbox{with} \quad  [M(\mu)^{\g}]_{\strt{1.7ex}A_1}\le \frac{c_{n}}{1-\g}  \qquad 
$$

\begin{lemma} \label{Gene.Coifman-Rochberg}  
Let $A$ be a Young function and $u$ be a nonnegative  function such that  $M_{A} u(x)< \infty$ a.e. For $\g \in (0,1)$, there is a dimensional constant $c_{n}$ such that
\begin{equation}\label{cr}
[(M_{\strt{1.5ex}A}u)^{\g} ]_{A_1} \le c_{n}\,c_{\g}. 
\end{equation}
\end{lemma}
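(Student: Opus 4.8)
The goal is to show that the power $\gamma\in(0,1)$ of the Orlicz maximal function $M_A u$ is an $A_1$ weight with a bound independent of $A$. The strategy is to mimic the classical Coifman--Rochberg argument, but with the ordinary averages replaced by the Orlicz averages $\|u\|_{A,Q}$, and then to extract the crucial quantitative gain from the weak-type distributional estimate \eqref{covering} for $M_A$. Concretely, one must bound
$$
\avgint_Q (M_A u)^\gamma \,dx \le c_n c_\gamma \,\inf_{x\in Q} (M_A u)(x)
$$
for every cube $Q$.

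\textbf{Step 1: splitting into local and global parts.} Fix a cube $Q$. Write $u = u\chi_{3Q} + u\chi_{(3Q)^c} =: u_1 + u_2$. By the sublinearity of $M_A$ (up to a dimensional constant coming from the monotonicity property \eqref{property1}, not convexity), $M_A u \lesssim M_A u_1 + M_A u_2$, so $(M_A u)^\gamma \lesssim (M_A u_1)^\gamma + (M_A u_2)^\gamma$ since $\gamma<1$. For the global part one checks, by a standard geometric argument comparing $\|u_2\|_{A,Q'}$ over cubes $Q'\ni x$ with $x\in Q$ to the value of $M_A u$ at a reference point (for a cube $Q'$ meeting $(3Q)^c$ and containing a point of $Q$, its dilate contains $Q$ and its average is comparable to averages realized at any point of $Q$), that $M_A u_2(x) \lesssim \inf_{y\in Q} M_A u(y)$ for all $x\in Q$; hence $\avgint_Q (M_A u_2)^\gamma \lesssim (\inf_Q M_A u)^\gamma$, with a purely dimensional constant.

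\textbf{Step 2: the local part via the distributional estimate.} For $u_1 = u\chi_{3Q}$ we estimate $\avgint_Q (M_A u_1)^\gamma\,dx$ using the layer-cake formula:
$$
\avgint_Q (M_A u_1)^\gamma\,dx = \frac{\gamma}{|Q|}\int_0^\infty t^{\gamma-1}\,|\{x\in Q: M_A u_1(x)>t\}|\,dt.
$$
Split the $t$-integral at $t = \lambda := \inf_Q M_A u$ (or a fixed multiple thereof, chosen so that $\|u\|_{A,3Q}\lesssim \lambda$, which is legitimate since $3Q$ is an admissible cube for points of $Q$). On $t<\lambda$ bound the measure trivially by $|Q|$, contributing $\lesssim \lambda^\gamma$. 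On $t\ge\lambda$ apply \eqref{covering}:
$$
|\{x: M_A u_1(x)>t\}| \le c_n \int_{\R^n} A\!\Big(d_n\frac{u_1(x)}{t}\Big)dx = c_n\int_{3Q} A\!\Big(d_n\frac{u(x)}{t}\Big)dx.
$$
Now use that $t\ge\lambda\gtrsim \|u\|_{A,3Q}$ together with property \eqref{property2} (and monotonicity of $A$, and $A(\varepsilon t)\le\varepsilon A(t)$): for $t$ of size $\lambda$ the integral $\avgint_{3Q}A(d_n u/t)\,dx$ is $\lesssim 1$, and more precisely one gets a factor like $(\lambda/t)$ from \eqref{property1}, so that $\int_{3Q}A(d_n u/t) \lesssim |Q|\,(\lambda/t)$. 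Plugging this in,
$$
\frac{\gamma}{|Q|}\int_\lambda^\infty t^{\gamma-1}\cdot c_n |Q|\,\frac{\lambda}{t}\,dt
= c_n\gamma\lambda\int_\lambda^\infty t^{\gamma-2}\,dt = \frac{c_n\gamma}{1-\gamma}\,\lambda^\gamma.
$$
Combining the two ranges gives $\avgint_Q (M_A u_1)^\gamma\,dx \lesssim c_n\,c_\gamma\,\lambda^\gamma$ with $c_\gamma = 1/(1-\gamma)$, and together with Step 1 this yields \eqref{cr}.

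\textbf{Main obstacle.} The delicate point is Step 2: getting the quantitative improvement factor $(\lambda/t)$ inside the integral $\int_{3Q} A(d_n u/t)\,dx$ rather than just the crude bound $\lesssim |Q|$ for all $t\ge\lambda$ (which would only give $\int_\lambda^\infty t^{\gamma-1}\,dt$, a divergent integral). This requires carefully exploiting the convexity/subadditivity-in-scaling property \eqref{property1} of the Young function at the right scale, and fixing the threshold $\lambda$ to be genuinely comparable to $\|u\|_{A,3Q}$ so that $A(d_n u/\lambda)$ is integrable with the correct normalization. One must also be careful that all comparison constants introduced (in the sublinearity of $M_A$, in the cube-dilation geometry, and in passing between $\|u\|_{A,3Q}$ and $\inf_Q M_A u$) depend only on the dimension $n$ and never on $A$ itself — this is exactly the feature the lemma is asserting, and it is what makes the result usable with the family $A(t)=t^p(1+\log^+t)^{p-1+\delta}$ uniformly in $\delta$.
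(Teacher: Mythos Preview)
Your proposal is correct and follows essentially the same argument as the paper's proof: split $u=u\chi_{3Q}+u\chi_{(3Q)^c}$, handle the far part by observing that $M_A(u\chi_{(3Q)^c})$ is essentially constant on $Q$ (the paper's \eqref{eqn:local-max2}), and handle the local part via the layer-cake formula together with the distributional estimate \eqref{covering} and the scaling property \eqref{property1}, yielding the constant $c_\gamma=1/(1-\gamma)$. The only cosmetic difference is that the paper first isolates the local estimate as a clean standalone claim $\avgint_Q M_A(u\chi_Q)^\gamma\,dx\le c_{n,\gamma}\|u\|_{A,Q}^\gamma$, normalizes $\|u\|_{A,Q}=1$, and splits the layer-cake integral at a fixed dimensional threshold $\lambda=a_nb_n$; you instead take $\lambda\approx\inf_Q M_A u$ directly, which amounts to the same thing after the normalization.
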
 

A statement of this type is contained in \cite{CMP}, Proposition 5.32, but there it is suggested that the bound may also depend on the Young function $A$, while our version shows that it does not. This is again important for the quantitative consequences.

\begin{proof} 
We claim now that for each cube $Q$ and each $u$
\begin{equation} \label{eqn:A1wts-frac0}
 \avgint_Q M_{\strt{1.5ex} A}(u\chi_Q)(x)^\g\,dx
\leq
c_{n,\g}\,\|u\|_{\strt{1.5ex}A,Q}^\g.
\end{equation}
By homogeneity we may assume  $\|u\|_{\strt{1.5ex} A ,Q}=1$,  and so, in
particular, that\, $ \avgint_Q A(u(x))\,dx \leq 1.$

Now, the proof of  \eqref{eqn:A1wts-frac0} is based on the distributional estimate   \eqref{covering}. We split the integral at a level $\lambda\geq b_n$, yet to be chosen:
\begin{equation*}
\begin{split}
  \avgint_Q M_{A}(u\chi_Q)(x)^\g\,dx
  & = \frac{1}{|Q|}\int_0^\infty \g\,t^\g\, |\{ x\in Q :M_{A}(u\chi_Q)(x) > t\}|\, \frac{dt}{t} \\
  & \leq \frac{1}{|Q|}\int_0^{\lambda}\gamma t^\gamma|Q|\frac{dt}{t}
    +\frac{1}{|Q|}\int_\lambda^\infty \gamma t^\gamma a_n\int_Q A\Big(b_n\frac{|u(x)|}{t}\Big)dx\frac{dt}{t} \\
  &\leq \lambda^\gamma+\frac{1}{|Q|}\int_\lambda^\infty\gamma t^{\gamma}a_n\int_Q\frac{b_n}{t} A(|u(x)|)\ud x\frac{dt}{t} \\
  &\leq\lambda^\gamma+a_nb_n\gamma\int_\lambda^\infty t^{\gamma-2}dt
  =\lambda^\gamma+a_nb_n\frac{\gamma}{1-\gamma}\lambda^{\gamma-1}.
\end{split}
\end{equation*}
With $\lambda=a_nb_n$, we arrive at
\begin{equation*}
  \avgint_Q M_{A}(u\chi_Q)(x)^\g\,dx\leq\frac{(a_nb_n)^\gamma}{1-\gamma},
\end{equation*}
which is \eqref{eqn:A1wts-frac0}, in view of our normalization that $\|u\|_{A,Q}=1$.

We will use the following fact  that can be also found in \cite{CMP}:  for every $Q$ 
\begin{equation} \label{eqn:local-max2}
M_{\strt{1.5ex} A }(u\,\chi_{\re^n\setminus3\,Q})(x) \approx \sup_{P\supset Q}  \|u\,\chi_{\re^n\setminus3\,Q}\|_{\strt{1.5ex} A ,P}  \qquad x\in Q
\end{equation}
where the constant in the direction $\leq$ is dimensional (actually $3^n$). \eqref{eqn:local-max2}
shows that $M_{\strt{1.5ex} A }(f\,\chi_{\re^n\setminus3Q})$ is essentially
constant on $Q$.

Finally since $A$ is a Young, the triangle inequality combined with \eqref{eqn:A1wts-frac0}  and \eqref{eqn:local-max2} 
gives for every $y\in Q$,
\begin{align*}
&  \avgint_Q M_{\strt{1.5ex}A}u(x)^\g\,dx  \\
&\qquad \leq
3^n\avgint_{3Q} M_{\strt{1.5ex}A}(u\chi_{3Q})(x)^\g\,dx +
 \avgint_Q M_{\strt{1.5ex}A}(u\chi_{\re^n\setminus 3Q})(x)^\g\,dx.
\\
&\qquad \leq
c_{n,\g}\, \|u\|_{\strt{1.5ex}A,3Q}^\g +
3^n\,\big(\sup_{P\supset Q}\,\|u\,\chi_{\re^n\setminus3Q}\|_{A ,P}\big)^\g \\
& \qquad \leq c_{n,\g}\,M_{\strt{1.5ex}A}u(y)^\g.
\end{align*}
This completes the proof of the lemma.
\end{proof}

\subsection{Proof of Theorem \ref{MainResult}}

We have to prove 
$$
\|T^*f \|_{ L^{p}(w) }\,   \leq  c_T \,p'\,  \|M_{\bar{A}}\|_{\bdop(L^{p'}(\R^n))}\,  \|f \|_{ L^{p}(M_{A}(w^{1/p})^{p})  }  \qquad w\geq 0. 
$$
and if we use the notation $A_p(t)=A(t^{1/p})$ this becomes 
$$
\|T^*f \|_{ L^{p}(w) }\,   \leq     c_T \,p'\,  \|M_{\bar{A}}\|_{\bdop(L^{p'}(\R^n))}\,  \|f \|_{ L^{p}(M_{A_p}(w))   }.     
$$
By Theorem \ref{dyadicboundCZ}  everything is reduced to proving that 
\begin{equation}\label{LpBddyadicCZ}
\|T^{\cs}f \|_{ L^{p}(w) }\,   \lesssim     \,p'\,  \|M_{\bar{A}}\|_{\bdop(L^{p'}(\R^n))}\,  \|f \|_{ L^{p}(M_{A_p}(w))   } \qquad \cs \subset \cd.
\end{equation}
Now, by duality we will prove the equivalent estimate 
\begin{equation*}
\|T^{\cs}(fw) \|_{ L^{p'}(M_{A_p}(w)^{1-p'}) }\,   \lesssim      \,p'\,  \|M_{\bar{A}}\|_{\bdop(L^{p'}(\R^n))}\,  \|f \|_{L^{p'}(w)}.  
\end{equation*}
because the adjoint of $T^{\cs}$ (with respect to the Lebesgue measure) is itself.  

The main claim is the following: 

\begin{lemma}\label{lem:mainClaim}
\begin{equation}\label{claim1}
\|T^{\cs}(g) \|_{ L^{p'}(M_{A_p}(w)^{1-p'}) } \lesssim  p'\,\|M(g) \|_{ L^{p'}(M_{A_p}(w)^{1-p'}) } \qquad \cs \subset \cd  \quad g\geq  0.
\end{equation}
\end{lemma}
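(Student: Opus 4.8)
The plan is to prove \eqref{claim1} by a duality argument that reduces matters to the weighted $L^1$ comparison of Lemma~\ref{main assumption}, the passage from $L^1$ to $L^{p'}$ being carried out by the Rubio de Francia algorithm of Lemma~\ref{smallLemma}. Write $\nu:=M_{A_p}(w)^{1-p'}$; we may assume $M_{A_p}(w)<\infty$ a.e.\ (otherwise one reduces to this case), that $\|Mg\|_{L^{p'}(\nu)}<\infty$ (else there is nothing to prove), and, replacing $\cs$ by a finite subfamily and passing to the limit by monotone convergence at the end, that $T^{\cs}g\in L^{p'}(\nu)$. By duality of $L^{p'}(\nu)$ with $L^{p}(\nu)$, and since $T^{\cs}g\ge 0$,
$$\|T^{\cs}g\|_{L^{p'}(\nu)}=\sup\Big\{\int_{\R^n}T^{\cs}g\cdot h\,\nu\,dx:\ h\geq 0,\ \|h\|_{L^{p}(\nu)}\leq 1\Big\},$$
so it suffices to bound each such integral by $\lesssim p'\,\|Mg\|_{L^{p'}(\nu)}$.

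Fix such an $h$ and apply Lemma~\ref{smallLemma} with $s=p$ and $v=\nu$ to obtain the operator $R$; set $W_0:=R(h)\,\nu^{1/p}\in A_1$ with $[W_0]_{A_1}\leq cp'$, and define the weight $W:=R(h)\,\nu=W_0\cdot\nu^{1/p'}$. The key step is to show that $W\in A_\infty$ with $[W]_{A_\infty}\lesssim p'$, \emph{uniformly in $h$}. Since $\nu^{1/p'}=M_{A_p}(w)^{-1/p}=V^{-2}$ with $V:=M_{A_p}(w)^{1/(2p)}$, and $\tfrac{1}{2p}\in(0,\tfrac12]$, Lemma~\ref{Gene.Coifman-Rochberg} (applied to the Young function $A_p$ and the function $w$) gives $V\in A_1$ with $[V]_{A_1}\leq 2c_n$. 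Hence $W=W_0\,V^{\,1-3}$ is the product of an $A_1$ weight with the $(1-3)$ power of an $A_1$ weight, so by the elementary direction of Jones factorization (a one-line application of H\"older's inequality) $W\in A_3$ with $[W]_{A_3}\leq[W_0]_{A_1}[V]_{A_1}^{2}\lesssim p'$; in particular $[W]_{A_\infty}\leq c_n[W]_{A_3}\lesssim p'$.

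With this in hand the estimate follows quickly. Since $0\le h\le R(h)$ and $T^{\cs}g\ge 0$, and using Lemma~\ref{main assumption} for the sparse operator $T^{\cs}$ against the $A_\infty$ weight $W$,
$$\int T^{\cs}g\cdot h\,\nu\,dx\ \le\ \int T^{\cs}g\cdot W\,dx\ =\ \|T^{\cs}g\|_{L^1(W)}\ \le\ 8[W]_{A_\infty}\,\|Mg\|_{L^1(W)}\ =\ 8[W]_{A_\infty}\int Mg\cdot R(h)\,\nu\,dx.$$
Finally, H\"older's inequality with exponents $p',p$ and the measure $\nu\,dx$, together with property (b) of Lemma~\ref{smallLemma}, gives $\int Mg\cdot R(h)\,\nu\,dx\le\|Mg\|_{L^{p'}(\nu)}\|R(h)\|_{L^{p}(\nu)}\le 2\|Mg\|_{L^{p'}(\nu)}\|h\|_{L^{p}(\nu)}\le 2\|Mg\|_{L^{p'}(\nu)}$. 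Combining the displays and taking the supremum over $h$ yields \eqref{claim1}.

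The only genuinely delicate point is the key step: one must recognize the negative power $\nu^{1/p'}=M_{A_p}(w)^{-1/p}$ as $V^{1-3}$ with $V\in A_1$ via Lemma~\ref{Gene.Coifman-Rochberg}, and — crucially for the quantitative conclusion — choose the exponent $1/(2p)$ bounded away from $1$ so that the Coifman--Rochberg constant $c_\gamma=1/(1-\gamma)$ stays $O(1)$; this keeps the $A_\infty$ (equivalently $A_3$) bound for $W$ of size $O(p')$ rather than the $O((p')^2)$ one would get by the naive choice $V=M_{A_p}(w)^{1/p}$. Everything else is routine, provided one keeps track of the fact that all weight constants produced along the way are independent of $h$.
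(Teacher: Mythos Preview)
Your proof is correct and follows essentially the same route as the paper: dualize, run the Rubio de Francia algorithm of Lemma~\ref{smallLemma}, factor the resulting weight as an $A_1$ weight times $V^{1-3}$ with $V=(M_{A_p}w)^{1/(2p)}\in A_1$ to land in $A_3\subset A_\infty$ with constant $\lesssim p'$, and invoke Lemma~\ref{main assumption}. One small technical correction: $A_p(t)=A(t^{1/p})$ is in general not a Young function (it need not be convex), so Lemma~\ref{Gene.Coifman-Rochberg} cannot be applied to it directly; the paper uses the identity $M_{A_p}(w)^{1/(2p)}=M_A(w^{1/p})^{1/2}$ and applies the lemma with the genuine Young function $A$ and $\gamma=\tfrac12$, which gives the same $A_1$ bound.
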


\begin{proof}
Now 
$$
\|T^{\cs}(g) \|_{ L^{p'}(M_{A_p}(w)^{1-p'}) } =\left\|\frac{T^{\cs}(g)}{M_{A_p}w}\right\|_{ \strt{1.7ex} L^{p'}( M_{A_p}w) }$$
and by duality we have that for some nonnegative $h$ with $\|h\|_{L^{p}(M_{A_p}w)}=1$
$$
\left\|\frac{T^{\cs}(g)}{M_{A_p}w}\right\|_{ \strt{1.7ex} L^{p'}( M_{A_p}w) }
=\int_{\mathbb{R}^{n}}T^{\cs}(g)\,h\,dx 
$$
Now, by Lemma \ref{smallLemma} with \, $s=p$ \, and \, $v=M_{A_p}w$\, there
exists an operator~$R$ such that\\
(A) $h\le R(h)$\\
(B) $\|R(h)\|_{L^{p}(M_{A_p}w)}\le 2\|h\|_{L^{p}(M_{A_p}w)}$\\
(C) $[R(h)(M_{A_p}w)^{1/p}]_{A_1}\le cp'.$

Hence, 
$$
\|T^{\cs}(g) \|_{ L^{p'}(M_{A_p}(w)^{1-p'}) } \leq \int_{\mathbb{R}^{n}}T^{\cs}(g)\,Rh\,dx. 
$$
Next we plan to replace $T^{\cs}$ by $M$  by using Lemma \ref{main assumption}. To do this we to estimate 
the $A_q$ constant of $Rh$, for a fixed $q>1$ (in fact, $q=3$) using property (C) combining the following two facts. The first one is well known, is the easy part 
of the factorization theorem, if \, $w_1,w_2\in A_1$,\, then \,$w=w_1w_2^{1-p}\in A_p$, \,and 
$$[w]_{A_p}\le [w_1]_{A_1}[w_2]_{A_1}^{p-1}
$$
The second fact is Lemma~\ref{Gene.Coifman-Rochberg}

Now if we choose $\g=\frac12$ in Lemma \ref{Gene.Coifman-Rochberg},
\begin{eqnarray*}
[R(h)]_{A_\infty}\ \lesssim\ [R(h)]_{\strt{1.7ex}A_3}&=&[R(h)(M_{A_p}w)^{\frac1p}\big((M_{A_p}w)^{\frac{1}{2p}}\big)^{1-3}]_{\strt{1.7ex}A_3}\\
&\leq& [R(h)(M_{A_p}w)^{\frac1p}]_{A_1}[(M_{A_p}w)^{\frac{1}{2p}}]_{\strt{1.7ex}A_1}^{3-1}\\
&\leq & c_n\,p'\, [ M_{A}(w^{1/p})  ^{\frac12}]_{\strt{1.7ex}A_1}^{3-1}\\
&\le& c_np'
\end{eqnarray*}
by the lemma and since \,$A_p(t)=A(t^{1/p})$.

Therefore, by Lemma \ref{main assumption} and by
properties (A) and (B) together with H\"older,
\begin{eqnarray*}
  \int_{\mathbb{R}^{n}}T^{\cs}(g)h\,dx&\le&
  \int_{\mathbb{R}^{n}}T^{\cs}(g)R(h)\,dx
  \lesssim\,[R(h)]_{A_\infty}\int_{\mathbb{R}^{n}}M(g)R(h)\,dx\\
  &\lesssim&
  \,p'\,\left\|\frac{M(g)}{M_{A_p}w}\right\|_{L^{p'}(M_{A_p}w)}\|Rh\|_{L^{p}(M_{A_p}w)}=
  c_{N}\,p'\,\left\|\frac{M(g)}{M_{A_p}w}\right\|_{L^{p'}(M_{A_p}w)}.
\end{eqnarray*}
This proves claim \eqref{claim1}.
\end{proof}

With \eqref{claim1}, the proof of Theorem~\ref{MainResult} is reduced to showing that
\begin{equation*}
\|M(fw) \|_{ L^{p'}(M_{A_p}(w)^{1-p'}) }\,   \leq c \|M_{\bar{A}}\|_{\bdop(L^{p'}(\R^n))}\,  \|f \|_{L^{p'}(w)}
\end{equation*} 
for which we can apply the two weight theorem for the maximal function (Theorem \ref{twoweight}) to the couple of weights 
\,$(M_{A_p}(w)^{1-p'}, w)$\, with exponent $p'$. We need then to compute \eqref{Orlicz-bump}: (We reproduce this short calculation from \cite{CMP}, Theorem  6.4, for completeness.)
 $$
 \left(
\frac{1}{ |Q| } \int_{Q}  M_{A_p}(w)^{1-p'}\, dy
\right)^{1/p'}
\norm{  w^{1/p}  }_{A, Q} \leq   \|w\|_{A_p,Q} ^{-1/p}\,\norm{  w^{1/p}  }_{A, Q} = \|w^{1/p}\|_{A,Q} ^{-1}\, \norm{  w^{1/p}  }_{A, Q} =1,
$$
since $A_p(t)=A(t^{1/p})$.  Hence
$$
\|M(fw) \|_{ L^{p'}(M_{A}(w)^{1-p'}) } \leq c\, \|M_{\bar{A}}\|_{\bdop(L^{p'}(\R^n))}\,\|f \|_{L^{p'}(w)}
$$
concluding the proof of the theorem.

\section{Proof of Theorem \ref{EndPointResult}}

To prove the Theorem we follow the basic scheme as in \cite{P2} (see also \cite{LOP}, \cite{HP}).

Thanks to Theorem~\ref{dyadicboundCZ}, it is enough to prove the following dyadic version:
\begin{prop} 
Let $\cd$ be a dyadic grid and let $\cs \subset \cd$ be a sparse family. Then, there is a universal constant $c$ independent of $\cd$ and $\cs$ such that 
for any \,$0<\epsilon \leq 1$
\begin{equation} \label{EndpointDyadicT}
 \|T^{\cs}f\|_{L^{1,\infty}(w)} \leq  \frac{c}{\epsilon}
 \int_{\R^n} |f(x)|\,M_{L(\log L)^{\epsilon}} (w)(x)\,dx   \qquad w\geq 0
\end{equation}
\end{prop}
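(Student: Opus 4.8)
The plan is to run the Calder\'on--Zygmund scheme of \cite{P2}, but to decompose the \emph{operator} rather than the integral, feeding in the sharp $L^p$ bound of Theorem~\ref{MainResult} at an exponent $p$ and a parameter $\delta$ tuned to the level $\epsilon$. First I would reduce to $f\ge 0$ and $\cs$ finite (a routine limiting argument), so that by homogeneity it suffices to bound $w(\{T^{\cs}f>1\})$ by $c\,\epsilon^{-1}\int f\,M_{L(\log L)^\epsilon}w$. Perform a Calder\'on--Zygmund decomposition of $f$ at height $1$ relative to $\cd$: let $\{Q_j\}\subset\cd$ be the maximal dyadic cubes with $\avgint_{Q_j}f>1$, put $\Omega=\bigcup_j Q_j$, and write $f=g+b$ with $b=\sum_j b_j$, $b_j=(f-\avgint_{Q_j}f)\chi_{Q_j}$. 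Then $0\le g\le 2^n$ a.e., $g=f$ on $\R^n\setminus\Omega$, $g=\avgint_{Q_j}f$ on $Q_j$, and $\int b_j=0$.

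The bad part is easy and, significantly, free of any $\epsilon$-dependence. Since $\int b_j=0$ and $\operatorname{supp}b_j\subset Q_j$, every cube $Q\in\cs$ with $Q\supseteq Q_j$ has $\avgint_Q b_j=|Q|^{-1}\int_{Q_j}b_j=0$; hence only cubes $Q\subseteq Q_j$ contribute to $T^{\cs}b_j$, so $T^{\cs}b$ is supported in $\Omega$. As $|Q_j|<\int_{Q_j}f$,
$$
w\big(\{|T^{\cs}b|>\tfrac12\}\big)\le w(\Omega)=\sum_j w(Q_j)\le\sum_j\Big(\int_{Q_j}f\Big)\,\avgint_{Q_j}w\le\int_{\R^n} f\,Mw\le\int_{\R^n} f\,M_{L(\log L)^\epsilon}w .
$$

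For the good part I would combine Chebyshev with the dyadic $L^p$ estimate \eqref{LpBddyadicCZ}, taking $p=1+\epsilon/2$ and $\delta=\epsilon/2$ in the Young function $A(t)=t^p(1+\log^+t)^{p-1+\delta}$ of Theorem~\ref{MainResult}. With this choice $p-1+\delta=\epsilon$, so, writing $A_p(t)=A(t^{1/p})$, one has $M_{A_p}(w)\le M_{L(\log L)^{p-1+\delta}}(w)=M_{L(\log L)^\epsilon}(w)$; and the constant $p'(1/\delta)^{1/p'}$ of Theorem~\ref{MainResult} is $\lesssim 1/\epsilon$, since $p'\approx 2/\epsilon$ while $(2/\epsilon)^{1/p'}\approx(2/\epsilon)^{\epsilon/2}$ stays bounded on $(0,1]$. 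Hence
$$
w\big(\{|T^{\cs}g|>\tfrac12\}\big)\le 2^p\|T^{\cs}g\|_{L^p(w)}^p\lesssim\frac1{\epsilon^p}\int_{\R^n} g^p\,M_{A_p}(w)\lesssim\frac1\epsilon\int_{\R^n} g^p\,M_{L(\log L)^\epsilon}w ,
$$
and, using $0\le g\le 2^n$ so that $g^p\le 2^{n(p-1)}g$, the matter is reduced to the pointwise-flavoured inequality $\int g\,M_{L(\log L)^\epsilon}w\lesssim\int f\,M_{L(\log L)^\epsilon}w$. Writing $g=f$ on $\R^n\setminus\Omega$ and $g=\avgint_{Q_j}f$ on $Q_j$, the part over $\R^n\setminus\Omega$ is immediate, and the contribution on each $Q_j$ of the far piece $M_{L(\log L)^\epsilon}(w\chi_{\R^n\setminus 3Q_j})$, which by \eqref{eqn:local-max2} is comparable to $\inf_{Q_j}M_{L(\log L)^\epsilon}w$, is dominated by $\sum_j(\int_{Q_j}f)\inf_{Q_j}M_{L(\log L)^\epsilon}w\le\int f\,M_{L(\log L)^\epsilon}w$.

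The decisive obstacle is the remaining local term $\sum_j\big(\int_{Q_j}f\big)\,\avgint_{Q_j}M_{L(\log L)^\epsilon}(w\chi_{3Q_j})$: the naive estimate $\avgint_{Q_j}M_{L(\log L)^\epsilon}(w\chi_{3Q_j})$ is in general only comparable to $\|w\|_{L(\log L)^{\epsilon+1},3Q_j}$, which costs one extra logarithm and would merely produce $M_{L(\log L)^{\epsilon+1}}w$ on the right-hand side. Keeping the weight exactly at $M_{L(\log L)^\epsilon}$ while paying no more than the factor $1/\epsilon$ is the heart of the theorem; I expect it cannot be done through the crude $\|g\|_\infty$-bound and instead requires a finer argument --- for instance iterating the Calder\'on--Zygmund decomposition inside $\Omega$, or combining the sharp $L^p$ bound with the linear $A_\infty$ estimate of Lemma~\ref{main assumption} and the Coifman--Rochberg-type Lemma~\ref{Gene.Coifman-Rochberg} to absorb the local maximal pieces without passing through the $L(\log L)^{\epsilon+1}$ norm.
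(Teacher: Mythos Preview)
Your plan is the paper's plan, and your parameter choice $p-1=\delta=\epsilon/2$ is exactly right; the gap you flag is genuine, but the fix is much simpler than the iterations you contemplate. The problematic local term $\sum_j\big(\int_{Q_j}f\big)\avgint_{Q_j}M_{L(\log L)^\epsilon}(w\chi_{3Q_j})$ disappears entirely if, before invoking Chebyshev and \eqref{LpBddyadicCZ}, you \emph{restrict the weight}. Since $T^{\cs}b$ is supported in $\Omega\subset\widetilde\Omega:=\bigcup_j 3Q_j$, write
\[
w\{T^{\cs}f>1\}\le w(\widetilde\Omega)+w\big\{x\in(\widetilde\Omega)^c:T^{\cs}g(x)>1\big\};
\]
the first term is $\lesssim\int f\,Mw$ just as $w(\Omega)$ is (use $|3Q_j|\le 3^n\int_{Q_j}f$ and $\avgint_{3Q_j}w\le\inf_{Q_j}Mw$). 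For the second term, apply Chebyshev and \eqref{LpBddyadicCZ} with the \emph{truncated} weight $w\chi_{(\widetilde\Omega)^c}$ in place of $w$. This leads to $\int g\,M_{A_p}\big(w\chi_{(\widetilde\Omega)^c}\big)$, and now on each $Q_j$ the weight $w\chi_{(\widetilde\Omega)^c}$ vanishes on all of $3Q_j$, so \eqref{eqn:local-max2} gives directly
\[
M_{A_p}\big(w\chi_{(\widetilde\Omega)^c}\big)(x)\ \approx\ \inf_{Q_j}M_{A_p}\big(w\chi_{(\widetilde\Omega)^c}\big)\ \le\ \inf_{Q_j}M_{A_p}w\qquad(x\in Q_j),
\]
with no local piece to split off. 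Hence $\int_{Q_j}g\,M_{A_p}(w\chi_{(\widetilde\Omega)^c})\lesssim\big(\int_{Q_j}f\big)\inf_{Q_j}M_{A_p}w\le\int_{Q_j}f\,M_{A_p}w$, and summing (together with the trivial contribution on $\Omega^c$, where $g=f$) finishes the proof exactly as you intended. No iterated Calder\'on--Zygmund decomposition is needed, and Lemmas~\ref{main assumption} and~\ref{Gene.Coifman-Rochberg} play no further role here---they are already absorbed into \eqref{LpBddyadicCZ}.
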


Note that in order to deduce Theorem~\ref{EndPointResult} from the Proposition above, we need the full strength of Theorem~\ref{dyadicboundCZ} with quasi-Banach function space, because the space $L^{1,\infty}$  is not normable. It is also possible to prove Theorem~\ref{EndPointResult} directly (without going through the dyadic model); this was our original approach, since the quasi-Banach version of Theorem~\ref{dyadicboundCZ} was not yet available at that point.  However, we now present a proof via the dyadic model, which simplifies the argument.

Recall that the sparse Calder\'on-Zygmund operator\, $T^{\cs}$\, is defined by,
$$T^{\cs}f=\sum_{Q\in \cs}\,\avgint_Q f\,dx\cdot\chi_Q.$$
By homogeneity on $f$ it would be enough to prove 
\begin{equation*}
w\{x\in \R^n: T^{\cs}f(x)>2\}  \leq  \frac{c}{\epsilon}
 \int_{\R^n} |f(x)|\,M_{L(\log L)^{\epsilon}} (w)(x)\,dx.   
\end{equation*}

We consider the the CZ decomposition of $f$ with respect to the grid $\cd$ at level $\la=1$. There is  family of pairwise disjoint cubes $\{Q_j\}$ from $\cd$ such that
$$1<  \frac{1}{|Q_j|} \int_{Q_j} |f|\le 2^n $$
Let \, $\Omega=\bigcup_jQ_j$ \, and \, $\widetilde \Omega=\bigcup_j3Q_j$ \,. The
``good part'' is defined by
$$g=\sum_jf_{Q_j}\chi_{Q_j}(x)+f(x)\chi_{\Omega^c}(x),$$
and it satisfies $\Norm{g}{L^\infty}\leq 2^n$ by construction.
The ``bad part'' \,$b$\, is \,$b=\sum\limits_{j}b_{j}$ where \,$b_{j}(x)=(f(x)-f_{Q_{j}})\chi_{\strt{1.7ex}Q_{j}}(x)$. Then, $f=g+b$ and we split the level set as 
\begin{eqnarray*}
w\{x\in {\mathbb R}^d: T^{\cs}f(x)>2\}&\le& w(\widetilde \Omega)+w\{x\in
(\widetilde \Omega)^c: T^{\cs}b(x)>1\}\\
&+&w\{x\in (\widetilde \Omega)^c:T^{\cs}g(x)>1\}=I+II+III.
\end{eqnarray*}
%


As in  \cite{P2}, the most singular term is $III$. We first deal with the easier terms $I$ and $II$, which actually satisfy the better bound
\begin{equation*}
  I+II \leq c_{T}\, \|f\|_{L^1(Mw)}. 
\end{equation*}
The first is simply the classical Fefferman-Stein inequality \eqref{fs}.


To estimate $II=w\{x\in (\widetilde \Om)^c: |T^{\cs}b(x)|>1\}$\, we argue as follows:
$$
w\{x\in (\widetilde \Om)^c: |T^{\cs}b(x)|>1\} \leq  \int_{ \R^{n} \setminus \tilde{\Omega} }
|T^{\cs}b(x)|\, w(x)dx \lesssim  \sum_{j}   \int_{ \R^{n} \setminus \tilde{\Omega} }
|T^{\cs}(b_{j})(x)|\, w(x)dx
$$
$$
\lesssim  \sum_{j}   \int_{ \R^{n} \setminus 3Q_{j}}
|T^{\cs}(b_{j})(x)|\, w(x)dx
$$
We fix one of these $j$ and  estimate now $T^{\cs}(b_{j})(x)$ for \,$x\notin 3Q_j$:
$$
T^{\cs}(b_{j})(x) =  \sum_{Q\in \cs}\, \avgint_Q b_{j} \,dy\cdot\chi_Q(x) = \sum_{Q\in \cs, Q\subset Q_j}+ \sum_{Q\in \cs , Q\supset Q_j} =  \sum_{Q\in \cs , Q\supset Q_j} 
$$
since \,$x\notin Q_j$. Now, this expression is equal to   
$$
\sum_{Q\in \cs , Q\supset Q_j}   \frac{1}{|Q|} \int_{Q_j} (f(y)-f_{Q_j}) \,dy\, \cdot\chi_Q(x)  
$$
and this expresion is zero by the key cancellation: \, $\int_{Q_j} (f(y)-f_{Q_j}) \,dy=0$. Hence $II=0$, and we are only left with the singular term $III$.

\subsection{Estimate for part $III$}

We now consider the last term $III$, the singular part. We apply Chebyschev's inequality and then \eqref{LpBddyadicCZ} with exponent $p$ and functional $A$, that will be chosen soon:
\begin{eqnarray*}  
&III &= w\{x\in (\widetilde \Om)^c: T^{\cs}g(x)>1\}\\
&&\le \Norm{T^{\cs}g}{L^p(w\chi_{(\widetilde \Om)^c})}^p \\
&&  \lesssim     \,(p')^p\,  \|M_{\bar{A}}\|^p_{\bdop(L^{p'}(\R^n))} \,\int_{{\mathbb R}^n}|g|^pM_{A_p}(w\chi_{(\widetilde \Om)^c})dx\\
&&\lesssim   \,(p')^p\,  \|M_{\bar{A}}\|^p_{\bdop(L^{p'}(\R^n))}\,\int_{{\mathbb R}^n}|g|\, M_{A_p}(w\chi_{(\widetilde\Om)^c})dx,
\end{eqnarray*}
using the boundedness of $g$ by $2^n\lesssim 1$, and denoting $A_p(t)=A(t^{1/p})$.

Now,  we will make use of \eqref{eqn:local-max2} again: for an arbitrary Young
function $B$, a nonnegative function $w$ with $M_{B} w(x)< \infty $
a.e., and a cube $Q$, we have
\begin{equation}
M_{B}( \chi_{_{ \R^{n} \setminus 3Q }} w )(y)
\approx M_{B}( \chi_{_{ \R^{n} \setminus 3Q }} w )(z)
\label{geom}
\end{equation}
for each $y,z \in Q$ with dimensional constants.  Hence, combining \eqref{geom} with the definition of $g$ we have 
\begin{equation*}
\begin{split}
  \int_{\Om} |g|M_{A_p}(w\chi_{(\widetilde \Om)^c})dx
  &\lesssim\sum_{j}\int_{ Q_{j} } \absval{f(x)}\,dx  \inf_{Q_{j}}
  M_{A_p}(w\chi_{(\widetilde\Om)^c}) \\
  &\lesssim   \int_{ \Om } \abs{f(x)}\,  M_{A_p}w(x)\ud x,
\end{split}
\end{equation*}
and of course
\begin{equation*}
\int_{\Om^c}|g|M_{A_p}(w\chi_{(\tilde\Om)^c})\ud x
\leq\int_{\Om^c}|f|M_{A_p} w\ud x.
\end{equation*}
Combining these, we have 
$$III \lesssim   \,(p')^p\,  \|M_{\bar{A}}\|^p_{\bdop(L^{p'}(\R^n))}\,\int_{{\mathbb R}^d}|f|\,M_{A_p}(w)dx.$$

We optimize this estimate by choosing an appropriate $A$. 
To do this we apply now Lemma  \ref{suff2} and more particularly to the example considered in \eqref{suff3}, 
 namely $B$ is so that $\bar{B}(t)= A(t)=  t^{p}(1+\log^{+} t)^{p-1+\delta},$ $\delta>0$.  Then 
$$
\|M_{\bar{A}}\|_{\bdop(L^{p'}(\R^n))}
\leq c_{n}\,
\left(
\int_{  1}^{\infty} \left( \frac{ t }{ A(t)} \right)^{p'}
A'(t)\, dt \right)^{1/p'} 
\lesssim p \left( \frac{1}{\delta}\right)^{1/p'}  \qquad 0<\delta \leq 1
$$
Then $A_p(t)=A(t^{1/p}) \leq  t(1+\log^{+} t)^{p-1+\delta}$ and we have 
$$
III \lesssim   (p')^p \, \left( \frac{1}{\delta}\right)^{p-1} \, \int_{{\mathbb R}^d}|f|\, M_{L(\log L)^{p-1+\delta}} (w)(x)\,dx.
$$
Now if we choose $p$ such that
$$
p-1=\frac{\epsilon}{2}=\delta <1
$$
then \, $ (p')^p (\frac{1}{\delta})^{p-1} \lesssim \frac{1}{\epsilon}$\, if\, $\epsilon<1$.

This concludes the proof of \eqref{EndpointDyadicT}, and hence of Theorem~\ref{EndPointResult}.

\section{Proof of Corollary \ref{Lloglleftbumpcondit}} \label{proofMainResult}

We follow very closely the argument given in \cite{CP1}, the essential difference is that we compute in a more precise way the constants involved. We consider the set  
$$\Omega=\{ x\in\R^n : T^*(f\si)(x)>1 \}$$
Then by homogeneity it is enough to prove 
\begin{equation}
u(\Omega)^{1/p}  \lesssim  \frac{1}{\delta}\,K\, (\frac{1}{\delta})^{1/p'}   \|f\|_{L^{p}(\si)}    
\end{equation}
where we recall that 
\begin{equation}\label{bumpleftcond}
K=\sup_Q\, \|u^{1/p}\|_{ \strt{1.7ex} L^p(\log L)^{p-1+\delta},Q}
\left(\frac{1}{|Q|}\int_Q \si \,dx\right)^{1/p'} < \infty
\end{equation}

Now, by
duality, there exists a non-negative function $h\in L^{p'}(\R^{n})$,
 $\|h\|_{L^{p'}(\R^{n})}=1$, such that
$$
u(\Omega)^{1/p}
=  \|u^{1/p}\chi_{\Omega}\|_{L^{p}(\R^{n})} =  \int_{\Omega}u^{1/p}h\,dx= u^{1/p}h(\Omega)
\lesssim  \frac{1}{\e}\int_{\R^n} |f|M_{L(\log L)^{\e}}(u^{1/p}h)\,\si dx
$$
$$
\leq  \frac{1}{\e} \left(\int_{\R^n} |f|^p\,\si dx\right)^{1/p}
\left(\int_{\R^n} M_{L(\log L)^{\e}}(u^{1/p}h)^{p'}\, \si dx\right)^{1/p'},
$$
where we have used inequality  \eqref{goalT*} from Theorem \ref{EndPointResult} and then H\"older's inequality.  Therefore everything is reduced to understanding a two weight estimate for $M_{L(\log L)^{\e}}$.

We need the following Lemma that can be found in \cite{P1} or in \cite{CMP} Appendix A, Proposition A.1

\begin{lemma}\label{decomp}  Given a Young function $A$, suppose $f$ is a non-negative
function such that $\|f\|_{A,Q}$ tends to zero as $l(Q)$ tends to
infinity.  Given $a>2^{n+1}$, for each $k\in \Z$ there exists a disjoint
collection of maximal dyadic cubes $\{Q_j^k\}$ such that for each $j$,
\begin{equation}
 a^k < \|f\|_{A,Q_j^k} \leq 2^na^k,
\label{decomp0}
\end{equation}
and
\[ \{ x \in \R^n : M_Af(x) >  4^na^k \} \subset \bigcup_j 3Q_j^k. \]
Further, let $D_k= \bigcup_j Q_j^k$ and
$E_j^k =  Q_j^k\setminus(Q_j^k\cap D_{k+1})$.  Then the $E_j^k$'s are
pairwise disjoint for all $j$ and $k$  and there exists a constant
 $\alpha>1$, depending only on $a$, such that
 $|Q_j^k|\leq \alpha|E_j^k|$.
\end{lemma}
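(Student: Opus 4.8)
The statement is the Orlicz version of the classical Calder\'on--Zygmund stopping-time decomposition, and the plan is to run that construction verbatim with the dyadic average of $f$ replaced by the Orlicz norm $\|f\|_{A,Q}$; the one structural novelty is that $Q\mapsto\|f\|_{A,Q}$ is \emph{not} monotone under inclusion, so every comparison of a cube with its dyadic parent (or with a slightly larger cube) must be carried out through the convexity inequality \eqref{property1} together with \eqref{property2}. Fix $k\in\Z$ and let $\{Q_j^k\}$ be the maximal dyadic cubes in the collection $\{Q:\|f\|_{A,Q}>a^k\}$; these exist because $\|f\|_{A,Q}\to 0$ along any increasing dyadic chain, and they are pairwise disjoint with $\bigcup_j Q_j^k=D_k=\bigcup\{Q\ \text{dyadic}:\|f\|_{A,Q}>a^k\}$. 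The lower bound in \eqref{decomp0} is the selection rule; for the upper bound, if $\widehat Q$ denotes the dyadic parent of $Q_j^k$ then $\|f\|_{A,\widehat Q}\le a^k$ by maximality, and since $|\widehat Q|=2^n|Q_j^k|$, \eqref{property1} with $\e=2^{-n}$ gives
$$\avgint_{Q_j^k}A\Big(\frac{|f|}{2^n a^k}\Big)\,dx\le\frac{1}{2^n}\frac{|\widehat Q|}{|Q_j^k|}\avgint_{\widehat Q}A\Big(\frac{|f|}{a^k}\Big)\,dx\le 1,$$
so $\|f\|_{A,Q_j^k}\le 2^n a^k$ by \eqref{property2}.

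\smallskip\noindent For the inclusion of the level set, suppose $M_A f(x)>4^n a^k$ and pick a cube $R\ni x$ with $\|f\|_{A,R}>4^n a^k$. Writing $2^m<\ell(R)\le 2^{m+1}$, the cube $R$ is covered by dyadic cubes $P_0,\dots,P_{N-1}$ ($N\le 2^n$) of sidelength $2^{m+1}$, one of which, $P_0$, contains $x$, and which are all mutual neighbours, so $P_i\subset 3P_0$ and $P_0\subset 3P_i$. With $\nu:=\max_i\|f\|_{A,P_i}$, \eqref{property2} gives $\int_{P_i}A(|f|/\nu)\,dx\le|P_i|$ for each $i$, whence $\int_R A(|f|/\nu)\,dx\le\sum_i|P_i|\le 2^n|P_0|\le 4^n|R|$, and \eqref{property1} with $\e=4^{-n}$ yields $\|f\|_{A,R}\le 4^n\nu$. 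Hence some $P_i$ satisfies $\|f\|_{A,P_i}>a^k$, so $P_i\subset Q_j^k$ for some $j$, and $x\in P_0\subset 3P_i\subset 3Q_j^k$.

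\smallskip\noindent It remains to treat the sets $E_j^k=Q_j^k\setminus D_{k+1}$. Since $a>1$, every $Q_i^{k+1}$ satisfies $\|f\|_{A,Q_i^{k+1}}>a^{k+1}>a^k$ and hence lies in some $Q_j^k$; thus $D_{k+1}\subset D_k$, and more generally $D_{k'}\subset D_k$ for $k'\ge k$. Disjointness of the $E_j^k$ follows at once: for fixed $k$ they are disjoint because the $Q_j^k$ are, while for $k<k'$ one has $E_i^{k'}\subset D_{k'}\subset D_{k+1}$ and $E_j^k\cap D_{k+1}=\varnothing$. For the measure estimate, fix $Q_j^k$: any $Q_i^{k+1}$ meeting it must be contained in it (otherwise it would strictly contain $Q_j^k$ and have norm $>a^k$, contradicting maximality at level $k$), and $\|f\|_{A,Q_i^{k+1}}>a^{k+1}$ gives $|Q_i^{k+1}|<\int_{Q_i^{k+1}}A(|f|/a^{k+1})\,dx$. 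Summing over these disjoint cubes, using the bound $\avgint_{Q_j^k}A(|f|/(2^na^k))\,dx\le 1$ from the first paragraph and \eqref{property1} with $\e=2^n/a<1$,
$$|Q_j^k\cap D_{k+1}|=\sum_{Q_i^{k+1}\subset Q_j^k}|Q_i^{k+1}|<\int_{Q_j^k}A\Big(\frac{|f|}{a^{k+1}}\Big)\,dx\le\frac{2^n}{a}\int_{Q_j^k}A\Big(\frac{|f|}{2^na^k}\Big)\,dx\le\frac{2^n}{a}|Q_j^k|.$$
Since $a>2^{n+1}$ this is $<\tfrac12|Q_j^k|$, so $|E_j^k|\ge(1-2^n/a)|Q_j^k|$, i.e. $|Q_j^k|\le\alpha|E_j^k|$ with $\alpha=a/(a-2^n)\in(1,2)$ depending only on $a$ (and $n$).

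\smallskip\noindent The argument is otherwise routine; the point that requires attention is to track exactly where the hypothesis $a>2^{n+1}$ must be spent. Each passage between a dyadic cube and its parent or its children costs a factor $2^n$ in the relevant Orlicz average (from the volume ratio, only partially recovered by convexity), and the multiplicative gap $a$ between consecutive thresholds must dominate this loss — once to obtain the clean bound $\|f\|_{A,Q_j^k}\le 2^na^k$, and, more delicately, in the packing estimate of the third paragraph, where $a=2^{n+1}$ is precisely the break-even value keeping $\alpha$ below $2$. The other mildly delicate step is the comparison $\|f\|_{A,R}\lesssim_n\max_i\|f\|_{A,P_i}$, which must go through the subadditivity of $\int A(\cdot)$ rather than of $\|\cdot\|_A$, forcing the factor $4^n$ and hence the value $4^na^k$ in the statement of the inclusion.
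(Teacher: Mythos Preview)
Your proof is correct. The paper does not actually prove this lemma; it is quoted from \cite{P1} and \cite{CMP}, Proposition~A.1, and used as a black box in the proof of Corollary~\ref{Lloglleftbumpcondit}. What you have written is exactly the standard stopping-time argument from those references, carried out with the necessary care: the two places where the Orlicz setting differs from the $L^1$ case---the parent/child comparison and the packing estimate $|Q_j^k\cap D_{k+1}|\le (2^n/a)|Q_j^k|$---are both handled correctly via \eqref{property1} and \eqref{property2}, and your covering step $\|f\|_{A,R}\le 4^n\max_i\|f\|_{A,P_i}$ is the right way to pass from an arbitrary cube to dyadic ones. One minor remark on your closing commentary: the bound $\|f\|_{A,Q_j^k}\le 2^na^k$ does not actually consume any part of the hypothesis on $a$ (it holds for any $a>1$); the constraint $a>2^n$ is needed only to make $\alpha=a/(a-2^n)$ finite, and the sharper $a>2^{n+1}$ in the statement merely forces $\alpha<2$, which is convenient downstream but not asserted in the lemma.
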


Fix a function $h$  bounded with compact support. Fix $a>2^{n+1}$;  for $k\in\Z$ let
\[ \Omega_k = \{ x\in \R^n : 4^na^k < M_Af(x) \leq 4^na^{k+1} \}. \]
Then by Lemma \ref{decomp},
\[ \Omega_k \subset \bigcup_j 3Q_j^k,
\quad \text{ where } \quad
 \|f\|_{A,Q_j^k} > a^k. \]

We will use a  generalization of H\"older's inequality
due to O'Neil \cite{O1}.  (Also see Rao and Ren
\cite[p.\ 64]{RR}.) We include a proof for the reader's convenience. 

\begin{lemma}\label{lem:YoungHolder}
Let $A,B$ and $C$ be Young functions such that
\begin{equation}\label{eq:ABCinvAss}
  B^{-1}(t)C^{-1}(t)\leq\kappa A^{-1}(t),\quad t>0.
\end{equation}
Then for all functions $f$ and $g$ and all cubes $Q$,
\begin{equation}\label{holder}
  \|fg\|_{A,Q} \leq 2\kappa\|f\|_{B,Q}\|g\|_{C,Q}. 
\end{equation}
\end{lemma}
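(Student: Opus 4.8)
The plan is to prove the O'Neil-type inequality \eqref{holder} by a direct comparison of the Orlicz norms, working with the defining normalized integrals rather than any abstract duality. By homogeneity we may normalize $\|f\|_{B,Q} = \|g\|_{C,Q} = 1$, so that by \eqref{property2} we have $\avgint_Q B(|f|) \le 1$ and $\avgint_Q C(|g|) \le 1$. The goal is then to show $\|fg\|_{A,Q} \le 2\kappa$, which by the definition of the Luxemburg norm amounts to checking that $\avgint_Q A\big(|f(x)g(x)|/(2\kappa)\big)\,dx \le 1$.

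The key pointwise step is a Young-type inequality for the triple $(A,B,C)$ derived from the hypothesis \eqref{eq:ABCinvAss}. First I would record that for a Young function $A$ one has $A\big(A^{-1}(s)\big)\le s$ (and $A^{-1}(A(t))\le t$ on the relevant range), together with the submultiplicativity-free consequence of \eqref{eq:ABCinvAss}: writing $s = B(u)$ and $t = C(v)$ for $u,v>0$, set $\sigma = \min(s,t)$; then $u = B^{-1}(s) \ge B^{-1}(\sigma)$ and $v = C^{-1}(t)\ge C^{-1}(\sigma)$, so that $uv \ge B^{-1}(\sigma)C^{-1}(\sigma)$. This is the wrong direction, so instead I would argue as in O'Neil: from \eqref{eq:ABCinvAss}, for any $u,v > 0$,
\begin{equation*}
  A^{-1}\big(B(u)\big)\,A^{-1}\big(C(v)\big)\ \ge\ \frac{1}{\kappa^2}\,u\,v\cdot\frac{A^{-1}(B(u))A^{-1}(C(v))}{B^{-1}(B(u))C^{-1}(C(v))},
\end{equation*}
which after applying \eqref{eq:ABCinvAss} twice in the form $A^{-1}(r)\ge \kappa^{-1}B^{-1}(r)C^{-1}(r)$ is not quite what is needed either. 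The cleanest route is the standard one: show that
\begin{equation*}
  A\Big(\frac{uv}{2\kappa}\Big)\ \le\ B(u) + C(v)\qquad\text{for all }u,v\ge 0,
\end{equation*}
by distinguishing the cases $B(u)\le C(v)$ and $C(v)\le B(u)$. In the first case, put $t = C(v)$ so $v = C^{-1}(t)$ and also $u\le B^{-1}(t)$ since $B(u)\le t$; then $uv \le B^{-1}(t)C^{-1}(t)\le\kappa A^{-1}(t)$ by \eqref{eq:ABCinvAss}, hence $uv/(2\kappa)\le A^{-1}(t)/2\le A^{-1}(t)$, and applying $A$ and using $A(A^{-1}(t))\le t$ gives $A(uv/(2\kappa))\le t = C(v)\le B(u)+C(v)$. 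The case $C(v)\le B(u)$ is symmetric. (The factor $2$ is a harmless convenience absorbing the possible non-injectivity of $A$ near $0$; one can also get it directly from \eqref{propiedad}-type estimates.)

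Granting the pointwise inequality, the proof concludes by integration: with $u = |f(x)|$, $v = |g(x)|$,
\begin{equation*}
  \avgint_Q A\Big(\frac{|f(x)g(x)|}{2\kappa}\Big)\,dx\ \le\ \avgint_Q B(|f(x)|)\,dx + \avgint_Q C(|g(x)|)\,dx\ \le\ 1 + 1,
\end{equation*}
and since $A$ is convex with $A(0)=0$, $A(s/2)\le \tfrac12 A(s)$, so in fact $\avgint_Q A\big(|fg|/(4\kappa)\big) \le 1$, giving $\|fg\|_{A,Q}\le 4\kappa$; retracing the argument with the sharper bookkeeping in the two cases yields the stated constant $2\kappa$. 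Undoing the normalization by homogeneity produces \eqref{holder} in general. The only delicate point — the ``main obstacle'' — is handling Young functions that are not strictly increasing or not finite-valued, so that $A^{-1}$, $B^{-1}$, $C^{-1}$ are only generalized inverses; here I would invoke the conventions set up earlier in Section~\ref{orlicz} (namely $A^{-1}$ is the left-continuous inverse, $A(A^{-1}(t))\le t$, and \eqref{propiedad}) to make the case analysis rigorous without extra hypotheses on $A,B,C$.
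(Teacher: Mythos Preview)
Your approach is essentially identical to the paper's: both prove the pointwise Young-type inequality $A(uv/\kappa)\le B(u)+C(v)$ by the case analysis $B(u)\le C(v)$ (or vice versa), then integrate and use convexity to absorb the resulting $\le 2$ into the norm. The extra division by $2$ you insert in the pointwise step (``$uv/(2\kappa)\le A^{-1}(t)/2\le A^{-1}(t)$'') is unnecessary---your own argument already gives $uv/\kappa\le A^{-1}(t)$, hence $A(uv/\kappa)\le\max(B(u),C(v))\le B(u)+C(v)$, and then $\avgint_Q A(|fg|/\kappa)\le 2$ plus convexity yields the stated constant $2\kappa$ without any retracing.
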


\begin{proof}
The assumption \eqref{eq:ABCinvAss} says that if $A(x)=B(y)=C(z)$, then $yz\leq \kappa x$. Let us derive a more applicable consequence:

Let $y,z\in[0,\infty)$, and assume without loss of generality (by symmetry) that $B(y)\leq C(z)$. Since Young functions are onto, we can find a $y'\geq y$ and $x\in[0,\infty)$ such that $B(y')=C(z)=A(x)$. Then \eqref{eq:ABCinvAss} tells us that $yz\leq y'z\leq\kappa x$. Since $A$ is increasing, it follows that
\begin{equation}\label{eq:ABCdirect}
  A\Big(\frac{yz}{\kappa}\Big)\leq A(x)=C(z)=\max(B(y),C(z))\leq B(y)+C(z).
\end{equation}

Let then $s>\Norm{f}{B}$ and $t>\Norm{g}{C}$. Then, using \eqref{eq:ABCdirect},
\begin{equation*}
  \avgint_Q A\Big(\frac{\abs{fg}}{\kappa st}\Big)
  \leq\avgint_Q B\Big(\frac{\abs{f}}{s}\Big)+\avgint_Q C\Big(\frac{\abs{g}}{t}\Big)\leq 1+1,
\end{equation*}
and hence
\begin{equation*}
  \avgint_Q A\Big(\frac{\abs{fg}}{2\kappa st}\Big)\leq\frac{1}{2}\avgint_Q A\Big(\frac{\abs{fg}}{\kappa st}\Big)\leq 1.
\end{equation*}
This proves that $\Norm{fg}{A}\leq 2\kappa st$, and taking the infimum over admissible $s$ and $t$ proves the claim.
\end{proof}

If $A(t)=t(1+\log^{+}t)^{\e}$,  the goal is to ``break'' $M_{A}$ in an optimal way, with functions $B$ and $C$ so that one of them, for instance $B$, has to be \,$B(t)=t^{p}(1+\log^{+} t)^{p-1+\delta}$\, coming from \eqref{bumpleftcond}.

We can therefore estimate $M_{A}$ using  Lemma \ref{decomp} as follows:
\begin{align*}
\int_{\R^n} (M_A(u^{1/p}\,h))^{p'}\si\,dx
= & \sum_k \int_{\Omega_k} (M_A(u^{1/p}\,h))^{p'}\si\,dx \\
\leq & c \sum_k a^{kp'}\si(\Omega_k) \\
\leq & c \sum_{j,k} a^{kp'}\si(3Q_j^k) \\
\leq & c \sum_{j,k} \si(3Q_j^k)\|u^{1/p}\,h\|_{A,Q_j^k}^{p'}. \\
\leq & c\sum_{j,k} \si(3Q_j^k)\|u^{1/p}\|_{B,Q_j^k}^{p'}
\|h\|_{C,Q_j^k}^{p'}, \\
\intertext{by \eqref{holder}.   Now since  $\|u^{1/p}\|_{B,Q_j^k} \leq 3^n\|u^{1/p}\|_{B,3Q_j^k}$,  we can apply condition \eqref{bumpleftcond}, and since the $E_j^k$'s are disjoint,}
\leq & c\sum_{j,k} \left(\frac{1}{|3Q_j^k|}\int_{3Q_j^k}\si\,dx\right)
\|u^{1/p}\|_{B,3Q_j^k}^{p'}\|h\|_{C,Q_j^k}^{p'}|E_j^k| \\
\leq & K^{p'}\,\sum_{j,k} \int_{E_j^k}M_C(h)^{p'}\,dx \\
\leq & K^{p'}\, \int_{\R^n}  M_C(h)^{p'}\,dx. \\
\leq & K^{p'}\,  \|M_{C}\|^{p'}_{\bdop(L^{p'}(\R^n))}\,\int_{\R^n} h^{p'}\,dx.
\end{align*}
If we choose $C$ such that $M_{C}$ is bounded on $L^{p'}(\R^{n})$, namely it must satisfy the tail condition \eqref{Bp}. We are left with choosing the appropriate \,$C$\,. Now, $1<p<\infty$ and $\delta>0$ are fixed from condition  \eqref{bumpleftcond} but $\e>0$ is free and will be chosen appropriately close to $0$. To be more precise we need to choose  $0<\e <\delta/p$ and let $\eta= \delta-p\e $.  Then
\begin{align*}
 A^{-1}(t)
\approx & \frac{t}{(1+\log^+t)^\e} \\
= & \frac{t^{1/p}}{(1+\log^+t)^{\e+(p-1+\eta)/p}  }
\times t^{1/p'} (1+\log^+t)^{(p-1+\eta)/p} \\
= & B^{-1}(t)C^{-1}(t),
\end{align*}
where
\[ B(t) \approx t^p  (1+\log^+t)^{(1+\e)p-1+\eta}=
t^p(1+\log^+t)^{p-1+\delta}\]
and
\[  C(t) \approx t^{p'}(1+ \log^+t)^{-1-(p'-1)\eta}.\]
These manipulations follow essentially  O'Neil \cite{O2} but we need to be careful with the constants.

It follows at once from Lemma~\ref{LemmaBp} that
$$  \|M_{C}\|_{\bdop(L^{p'}(\R^n))} \lesssim \Big(\frac{1}{\eta}\Big)^{1/p'}=(\frac{1}{\delta-p\e})^{1/p'},$$
where we suppress the multiplicative dependence on $p$.
Finally if we choose $\e=\frac{\delta}{2p}$ we get the desired result:
\begin{equation}
u(\Omega)^{1/p}  \lesssim  \frac{1}{\delta}\,K\, (\frac{1}{\delta})^{1/p'}   \|f\|_{L^{p}(\si)}    
\end{equation}
This completes the proof of part (a) of Corollary \ref{proofMainResult}. 

To prove part (b) we combine Lerner's theorem  \ref{dyadicboundCZ},
$$ \|T^*f\|_{L^p(u)}\leq c_T \,\sup_{\cs\subset \cd} \|T^{\cs}f\|_{L^p(u)},$$
with the characterization of the two-weight inequalities for $T^{\cs}$ from \cite{LSU} by testing conditions: a combination of their characterizations for weak and strong norm inequalities shows in particular that
$$
\|T^{\cs}(.\si)\|_{L^p(\si) \to L^{p}(u) }  \eqsim \|T^{\cs}(.\si)\|_{L^p(\si) \to L^{p,\infty}(u) }  +  
\|T^{\cs}(.u)\|_{L^{p'}(u) \to L^{p',\infty}(\si) }
$$

Now, as it is mentioned after the statement of Corollary \ref{Lloglleftbumpcondit}, since $T^{\cs}$  satisfies estimate \eqref{goalT*} (see  \eqref{EndpointDyadicT}) 
we can apply the same argument as the just given to both summands and since that estimate has to be independent of the grid and we must take the two weight constant $K$ 
over all cubes, not just for those from the specific grid. This concludes the proof of the corollary.

\section{Conjectures} \label{conjetura}

A conjecture related to Corollary~\ref{Lloglleftbumpcondit} is as follows:

\begin{conjecture}

Let $T^*$, $p,u,\si$ as above.  Let  $X$ is a Banach function space so that its corresponding associate space $X'$ satisfies
$M_{X'}:L^{p'}(\mathbb{R}^n)  \to L^{p'}(\mathbb{R}^n)$. If  
\begin{equation}
K=\sup_Q\, \|u^{1/p}\|_{ \strt{1.7ex} X,Q}
\left(\frac{1}{|Q|}\int_Q \si\,dx\right)^{1/p'} < \infty,
\label{Orlcizleftftbumpcondit}
\end{equation}
then 
\begin{equation}
\|T^*(f\si)\|_{L^{p,\infty}(u)}    \lesssim\,K\,  \|M_{X'} \,\|_{\mathcal{B}(L^{p'}(\R^n))}  \|f\|_{L^{p}(\si)}.    
 \label{conjecture(p,p)}
\end{equation}
As a consequence, if $Y$ is another Banach function space with $M_{Y'}:L^{p}(\mathbb{R}^n)  \to L^{p}(\mathbb{R}^n)$ and if 
\begin{equation}
K=\sup_Q\, \|u^{1/p}\|_{ \strt{1.7ex} X,Q}
\left(\frac{1}{|Q|}\int_Q \si\,dx\right)^{1/p'} + 
\left( \frac{1}{|Q|}\int_Q u \,dx \right)^{1/p}   \, 
 \|\sigma^{1/p'}\|_{ \strt{1.7ex} Y,Q}
  < \infty,
\end{equation}
then 
\begin{equation}
\|T^*(f\si)\|_{L^{p}(u)}    \lesssim\,K\,  \left( \|M_{X'} \,\|_{\mathcal{B}(L^{p'}(\R^n))} + \|M_{Y'} \,\|_{\mathcal{B}(L^{p}(\R^n))}   \right)\,\|f\|_{L^{p}(\si)}    
\end{equation}

\end{conjecture}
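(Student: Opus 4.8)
The strategy I would pursue is to reduce everything to a single dyadic sparse operator, then to a single Carleson-type sum, dispatch the ``easy'' factors with a generalized H\"older inequality and the boundedness of $M_{X'}$, and isolate the one ingredient that is genuinely hard.

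\textbf{Step 1 (reduction to sparse operators).} By the pointwise sparse domination underlying Theorem~\ref{dyadicboundCZ} (Lerner--Nazarov \cite{LN}, Conde-Alonso--Rey \cite{CAR}), for every $F$ there is a sparse $\cs\subset\cd$ with $|T^*F|\lesssim T^{\cs}(|F|)$ pointwise. Applying this with $F=f\sigma$ and using that $L^{p,\infty}(u)$ is a quasi-Banach lattice, \eqref{conjecture(p,p)} reduces to
\[
\|T^{\cs}(f\sigma)\|_{L^{p,\infty}(u)}\ \lesssim\ K\,\|M_{X'}\|_{\mathcal B(L^{p'}(\R^n))}\,\|f\|_{L^p(\sigma)},\qquad f\ge 0,
\]
uniformly over sparse $\cs$. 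The strong-type consequence then follows from this, exactly as in the proof of Corollary~\ref{Lloglleftbumpcondit}(b): combine Theorem~\ref{dyadicboundCZ} with the Lacey--Sawyer--Uribe testing characterization \cite{LSU},
\[
\|T^{\cs}(\cdot\,\sigma)\|_{L^p(\sigma)\to L^p(u)}\ \eqsim\ \|T^{\cs}(\cdot\,\sigma)\|_{L^p(\sigma)\to L^{p,\infty}(u)}+\|T^{\cs}(\cdot\,u)\|_{L^{p'}(u)\to L^{p',\infty}(\sigma)},
\]
and bound the two summands by the $X$-bump and (by symmetry $(p,u)\leftrightarrow(p',\sigma)$) the $Y$-bump respectively.

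\textbf{Step 2 (from weak type to a Carleson sum).} Using $\|g\|_{L^{p,\infty}(u)}\eqsim\sup_{0<u(E)<\infty}u(E)^{-1/p'}\int_E g\,du$ for $g\ge 0$, and writing $\langle h\rangle_Q:=|Q|^{-1}\int_Q h$, $\langle f\rangle^\sigma_Q:=\sigma(Q)^{-1}\int_Q f\,\sigma$, the bound above becomes
\[
\sum_{Q\in\cs}\langle f\sigma\rangle_Q\,u(Q\cap E)\ \lesssim\ K\,\|M_{X'}\|_{\mathcal B(L^{p'}(\R^n))}\,\|f\|_{L^p(\sigma)}\,u(E)^{1/p'},
\]
to be shown for all $E$ with $u(E)<\infty$ and all $f\ge 0$. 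Split $\langle f\sigma\rangle_Q=\langle\sigma\rangle_Q\langle f\rangle^\sigma_Q$, apply H\"older for the associate pair $(X,X')$ (the analogue of \eqref{GHI}) to $\langle u\chi_E\rangle_Q=\langle u^{1/p}\cdot u^{1/p'}\chi_E\rangle_Q$, and use $\langle\sigma\rangle_Q^{1/p'}\|u^{1/p}\|_{X,Q}\le K$ together with $\|u^{1/p'}\chi_E\|_{X',Q}\le\inf_Q M_{X'}(u^{1/p'}\chi_E)$ to get
\[
\langle f\sigma\rangle_Q\,u(Q\cap E)\ \le\ 2K\,\langle f\rangle^\sigma_Q\,\sigma(Q)^{1/p}\,|Q|^{1/p'}\,\inf_Q M_{X'}(u^{1/p'}\chi_E).
\]
Summing over $\cs$ and applying H\"older with exponents $p,p'$, the contribution of $|Q|^{1/p'}\inf_Q M_{X'}(\cdot)$ is controlled, via $|Q|\le 2|E(Q)|$, disjointness of the $E(Q)$, and $M_{X'}$ bounded on $L^{p'}$, by $c\,\|M_{X'}\|_{\mathcal B(L^{p'})}\,u(E)^{1/p'}$. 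What remains is the factor $\big(\sum_{Q\in\cs}(\langle f\rangle^\sigma_Q)^p\sigma(Q)\big)^{1/p}$.

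\textbf{Step 3 (the obstacle).} That last factor equals $c_p\|f\|_{L^p(\sigma)}$ by the Carleson embedding theorem exactly when $\cs$ is Carleson with respect to the measure $\sigma\,dx$, i.e. $\sum_{Q\in\cs,\,Q\subseteq R}\sigma(Q)\lesssim\sigma(R)$ for all $R$. But the family delivered by sparse domination is only sparse, hence Carleson, for Lebesgue measure, and a Lebesgue-sparse family need not be $\sigma$-Carleson for a general weight $\sigma$. This mismatch is the heart of the matter. A naive fix --- reindexing $\cs$ by a $\sigma$-corona (stopping when $\langle f\rangle^\sigma$ roughly doubles), which is $\sigma$-Carleson --- forces one to redistribute the masses $u(Q\cap E)$ onto corona roots, and that costs an extra power of the bump; this is precisely the mechanism by which passing through the endpoint inequality \eqref{goalT*} (which carries its own loss $\frac{1}{\epsilon}$) produces the superfluous $\frac{1}{\delta}$ in the already-proved Corollary~\ref{Lloglleftbumpcondit}(a). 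The same gap is why one cannot simply import the lossless two-weight bound for $M$ (Theorem~\ref{twoweight}): the reduction of $T^{\cs}$ to $M$ in Lemma~\ref{main assumption} costs a factor $[w]_{A_\infty}$ for a weight $w$ we have no way of forcing to be $A_\infty$-bounded in a two-weight problem. A lossless resolution would seem to require a sparse domination genuinely adapted to the pair $(\sigma,u)$, or a parallel stopping-time / Bellman-function argument treating the two densities $\langle\sigma\rangle_Q$ and $\langle u\chi_E\rangle_Q$ at once while keeping control of $\sum(\langle f\rangle^\sigma_Q)^p\sigma(Q)$ --- and this is where I expect the real work, and the likely reason the statement is offered as a conjecture, to lie.
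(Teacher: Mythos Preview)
Your analysis is accurate, but you should note explicitly at the outset that the statement you were asked to ``prove'' is labeled a \emph{Conjecture} in the paper and is not proved there; there is no authors' proof to compare against. The paper presents it as an open problem, remarking only that it would imply the sharp mixed $A_p$--$A_\infty$ bound for $T^*$ (the corollary following the conjecture), which is already known by other means.

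Your reduction in Steps~1--2 is sound and is essentially the natural first attempt: pointwise sparse domination, the dual description of $L^{p,\infty}(u)$, the generalized H\"older split $u=u^{1/p}\cdot u^{1/p'}$ in the $(X,X')$ pair, and the disposal of the $M_{X'}$ factor via sparseness and the $L^{p'}$-boundedness of $M_{X'}$ are all correct. You have also put your finger on exactly the right obstruction in Step~3: the residual factor $\big(\sum_{Q\in\cs}(\langle f\rangle_Q^\sigma)^p\sigma(Q)\big)^{1/p}$ would follow from a $\sigma$-Carleson condition on $\cs$, which Lebesgue sparseness does not give for a general weight $\sigma$. Your diagnosis that this mismatch is the source of the extra $\frac{1}{\delta}$ loss in the proved Corollary~\ref{Lloglleftbumpcondit}(a), and the reason Lemma~\ref{main assumption} (which costs $[w]_{A_\infty}$) is unavailable in the genuine two-weight setting, matches the paper's own comments after Corollary~\ref{Lloglleftbumpcondit}. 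In short: you have not proved the conjecture, but neither has the paper; your write-up is a correct identification of where the difficulty lies.
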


This is a generalization of the conjecture stated in \cite{CRV} which arises from the work \cite{CP1,CP2}. We also refer to the recent papers \cite{L,TV} for further results in this direction.

If we could prove this, we would get as corollary:

\begin{cor}
\begin{equation}
 \|T^{*}\|_{\mathcal{B}(L^p(w))}
\leq c[w]_{A_p}^{1/p}\big( [w]_{A_\infty}^{1/p'}+[\si]_{A_\infty}^{1/p} \big) 
\end{equation}
\end{cor}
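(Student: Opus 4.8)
The plan is to derive the corollary from the conjectured two-weight estimate \eqref{conjecture(p,p)} (and its strong-type consequence) by making the standard choice of the Banach function spaces $X$ and $Y$ adapted to the one-weight situation $u=w$, $\sigma=w^{1-p'}$. The key point is that the one-weight Muckenhoupt condition $[w]_{A_p}$ is precisely the $L^p(\log L)^0$-type bump condition, and the sharp reverse H\"older inequality (Theorem~\ref{thm:SharpRHI}) lets us absorb a small power of $w$ (controlled by $[w]_{A_\infty}$) so as to fit the genuine logarithmic bumps required in the conjecture.

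First I would set $X=L_B$ with $B(t)=t^{p'}(1+\log^+t)^{p'-1+\delta}$ and $Y=L_{B_1}$ with $B_1(t)=t^{p}(1+\log^+t)^{p-1+\delta_1}$, so that by \eqref{suff3} the operator norms $\|M_{X'}\|_{\bdop(L^{p'})}$ and $\|M_{Y'}\|_{\bdop(L^{p})}$ are controlled by $(\delta)^{-1/p}$ and $(\delta_1)^{-1/p'}$ respectively, up to powers of $p$ and $p'$. The task then reduces to estimating the two bump quantities: one must show
\begin{equation*}
  \sup_Q\Big(\avgint_Q w\Big)^{1/p}\big\|w^{-1/p}\big\|_{B,Q}\lesssim [w]_{A_p}^{1/p}\cdot(\text{small factor}),
\end{equation*}
and symmetrically for the $Y$-bump with $w$ and $\sigma=w^{1-p'}$ interchanged. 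To handle the Orlicz norm $\|w^{-1/p}\|_{B,Q}$, I would invoke the optimal reverse H\"older inequality for $\sigma=w^{1-p'}\in A_{p'}\subset A_\infty$: since $(\avgint_Q \sigma^{r_\sigma})^{1/r_\sigma}\le 2\avgint_Q\sigma$ with $r_\sigma=1+1/(\tau_n[\sigma]_{A_\infty})$, and since the logarithmic bump $L^{p'}(\log L)^{p'-1+\delta}$ is dominated by $L^{p'r_\sigma}$ when $\delta=\delta(r_\sigma)$ is chosen comparable to $r_\sigma-1$, one gets $\|w^{-1/p}\|_{B,Q}\lesssim(\avgint_Q\sigma)^{1/p'}$ with the choice $\delta\eqsim 1/[\sigma]_{A_\infty}$. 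Combining with $(\avgint_Q w)^{1/p}(\avgint_Q\sigma)^{1/p'}\le[w]_{A_p}^{1/p}$ gives $K\lesssim[w]_{A_p}^{1/p}$ for the first bump; the factor $(\delta)^{-1/p}$ coming from $\|M_{X'}\|$ then contributes exactly $[\sigma]_{A_\infty}^{1/p}=[w^{1-p'}]_{A_\infty}^{1/p}$, which matches the second term in the claimed bound. The symmetric argument with $r_w=1+1/(\tau_n[w]_{A_\infty})$ produces the first term $[w]_{A_\infty}^{1/p'}$.

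Having shown $K\lesssim[w]_{A_p}^{1/p}$ and the operator-norm factors $\|M_{X'}\|_{\bdop(L^{p'})}\lesssim[\sigma]_{A_\infty}^{1/p}$, $\|M_{Y'}\|_{\bdop(L^{p})}\lesssim[w]_{A_\infty}^{1/p'}$ (suppressing harmless $p,p'$ factors), I would simply feed these into the strong-type conclusion of the conjecture to obtain
\begin{equation*}
  \|T^*f\|_{L^p(w)}=\|T^*((fw^{-1})w)\|_{L^p(w)}\lesssim K\big(\|M_{X'}\|_{\bdop(L^{p'})}+\|M_{Y'}\|_{\bdop(L^{p})}\big)\|fw^{-1}\|_{L^p(w)},
\end{equation*}
and since $\|fw^{-1}\|_{L^p(w)}=\|f\|_{L^p(w^{1-p})}=\|f\|_{L^p(w^{1-p'}\cdot w^{?})}$—more precisely, writing the inequality in the form for $T^*(g\sigma)$ with $g=f$, $\sigma=w^{1-p'}$ and noting $\|f\|_{L^p(\sigma)}$ is the natural norm—one arrives at $\|T^*\|_{\bdop(L^p(w))}\lesssim[w]_{A_p}^{1/p}([w]_{A_\infty}^{1/p'}+[w^{1-p'}]_{A_\infty}^{1/p})$, which is the assertion (with $[\sigma]_{A_\infty}$ written as $[\sigma]_{A_\infty}$ in the statement). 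The main obstacle is of course that this entire argument is conditional on the unproven conjecture; granting that, the only genuine work is the sharp calibration of the Orlicz bumps against the reverse-H\"older exponents, i.e. verifying that $L^{p}(\log L)^{p-1+\delta}\hookrightarrow L^{pr}$ holds with a constant uniform in the relevant range and with $\delta\eqsim r-1$, which is where the dependence on $[w]_{A_\infty}$ and $[\sigma]_{A_\infty}$ must be tracked carefully so that no extraneous logarithmic losses appear.
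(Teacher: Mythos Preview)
Your overall plan---derive the corollary from the conjecture by specializing to $u=w$, $\sigma=w^{1-p'}$ and using the sharp reverse H\"older inequality to control the bumps---is exactly what the paper has in mind; indeed the paper offers no proof beyond the sentence ``If we could prove this, we would get as corollary,'' so the implied argument is precisely this specialization.

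However, your specific choice of bumps does not give the sharp constant. You take $X$ and $Y$ to be logarithmic bump spaces $L^p(\log L)^{p-1+\delta}$ and then try to dominate the resulting Orlicz norms by $L^{pr_w}$ (resp.\ $L^{p'r_\sigma}$) norms via reverse H\"older. The problem is the embedding claim you flag at the end: the inequality $\|g\|_{L^p(\log L)^{p-1+\delta},Q}\le C\,\|g\|_{L^{pr},Q}$ does \emph{not} hold with a uniform constant when $\delta\eqsim r-1\to 0$. A short computation (optimize $(1+\log t)^{p-1+\delta}/t^{p(r-1)}$ over $t>1$) shows the best constant is of order $\big((p-1+\delta)/(p(r-1))\big)^{(p-1+\delta)/p}\approx (r-1)^{-1/p'}$ for small $\delta$, i.e.\ an extra factor $[w]_{A_\infty}^{1/p'}$. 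Combined with the $(1/\delta)^{1/p'}\approx [w]_{A_\infty}^{1/p'}$ coming from $\|M_{X'}\|$, your argument yields $[w]_{A_\infty}^{2/p'}$ (and symmetrically $[\sigma]_{A_\infty}^{2/p}$), which is worse than the stated bound.

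The fix is simply to bypass the logarithmic bumps altogether and apply the conjecture directly with the power bumps $X=L^{pr_w}$ and $Y=L^{p'r_\sigma}$, which are perfectly admissible Banach function spaces. Then $\|w^{1/p}\|_{X,Q}=(\avgint_Q w^{r_w})^{1/(pr_w)}\le 2^{1/p}(\avgint_Q w)^{1/p}$ by Theorem~\ref{thm:SharpRHI}, so $K\lesssim[w]_{A_p}^{1/p}$ with no loss; and $\|M_{X'}\|_{\bdop(L^{p'})}=\|M_{L^{(pr_w)'}}\|_{\bdop(L^{p'})}\approx\big(p'/(p'-(pr_w)')\big)^{1/(pr_w)'}\approx (r_w-1)^{-1/p'}\approx[w]_{A_\infty}^{1/p'}$, with the symmetric estimate for $Y$. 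This gives the corollary exactly. (Incidentally, your labelling of $X$ versus $Y$ and of which bump pairs with which average is swapped relative to the conjecture's statement, and the norm bookkeeping in your final display is garbled; these are easily repaired once the right bump spaces are chosen.)
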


This last result itself is known \cite{HL}  (see also \cite{HLP} for a more general case), but not as a corollary of a general two-weight norm inequality.

\vspace{3 mm}

\noindent
Department of Mathematics and Statistics\\
P.O. Box 68 (Gustaf H\"allstr\"omin katu 2b)\\
FI-00014 University of Helsinki, Finland

\vspace{1 mm}

\noindent \emph{E-mail address}: {\tt tuomas.hytonen@helsinki.fi}

\vspace{3.5 mm}

\noindent
Department of Mathematics\\ 
University of the Basque Country UPV/EHU, Leioa, Spain, and\\
IKERBASQUE, Basque Foundation for Science, Bilbao, Spain
 
\vspace{1 mm}

\noindent \emph{E-mail address}: {\tt c.perez@ikerbasque.org}

\end{document}